\def \Diff{{\mathop{\rm Diff\,}}}
\def \Hess{{\mathop{\rm Hess}}}
\def \ba{{\operatorname{\mathbf{a}}}}
\def \bb{{\operatorname{\mathbf{b}}}}
\def \bu{{\operatorname{\mathbf{u}}}}
\def \bx{{\operatorname{\mathbf{x}}}}
\def \by{{\operatorname{\mathbf{y}}}}
\def \bo{{\operatorname{\mathbf{0}}}}
\def \RR{\operatorname{\mathbb R}}
\def \CL{{\operatorname{\mathcal{L}}}}
\def \CV{{\operatorname{\mathcal{V}}}}
\def \CF{{\operatorname{\mathfrak{F}}}}
\def \CP{{\operatorname{\mathfrak{P}}}}
\def \dint{{\mathop{\frac{d }{d t}}}}
\newtheorem{thm}{Theorem}[section]
\newtheorem{defn}[thm]{Definition}
\newtheorem{lem}[thm]{Lemma}
\newtheorem{corollary}[thm]{Corollary}
\newtheorem{proposition}[thm]{Proposition}
\newtheorem{example}[thm]{Example}
\title{An Analytical Analogue of Morse's Lemma\footnote{This work is supervised by Prof. Dominic Joyce, and supported by the Engineering and Physical Sciences Research Council [EP/L015811/1].}}
\author{Yixuan Wang}
\date{}
\begin{document}
\maketitle

\begin{abstract}
\noindent The Morse function $f$ near a non-degenerate critical point $p$ is understood topologically, in the light of Morse's lemma. However, Morse's lemma standardizes the function $f$ itself, providing little information of how the gradient $\nabla f$ behaves.\\
In this paper, we prove an analytical analogue of Morse's lemma, showing that there exist smooth local coordinates on which a generic Morse gradient field $\nabla f$ near the critical point exhibits a unique linear vector field. We show that on a small neighbourhood of the critical point, the gradient field $\nabla f$ has a natural choice of standard form $V_0(\bx)=\sum_{i=1}^n \lambda_ix_i\frac{\partial}{\partial x_i}$, and this form only depend on the local behaviour of the Morse function and the Riemannian metric near the critical point. Then we present a constructive proof of the fact that given a generic Morse function $f$, for every critical point, there is a local coordinate on which the gradient field reduces to its standard form.
\end{abstract}
\tableofcontents


\section{The focus and main results.}
Given a Morse function $f$ on a Riemannian manifold $X$ with a critical point $p$, the Morse Lemma guarantees the existence of a coordinate chart centred at $p$, where the Morse function exhibits a standard formula determined only by the Morse index of $p$. The focus of this paper is to answer the following question: analogously, is there a local coordinate where the gradient of $f$ is standardized to a nice vector field?\\
As a theme, the question of finding the analytical friendly representations of vector fields via choices of coordinates orchestrates its variations in different chords. For example, Takens' work \cite{Takens74} concerns singularities of $C^k$ vector fields and their codimensions, and the proof utlizes general normal forms for these vector fields; Meanwhile, Palis and Smale \cite{PalaisSmale69} formulate the question implicitly in their theory of structural stability, for gradient dynamic systems satisfying what's now called Axiom A and strong transversality.\\
In the case of rectifying the Morse gradient field on a Riemannian manifold, we elaborate the answer to this question as follows, which is our main result.
\begin{defn}[$\mathbb{N}$-linearity condition]\label{defn_N_linearity}
We say that a group of real numbers $\lambda_1,\dots,\lambda_n$ satisfies the {\rm{$\mathbb{N}$-linearity condition}}, if the following equation
\[a_1\lambda_1+\dots+a_n\lambda_n-\lambda_i=0, \]
has no solution for all $(a_1,\dots,a_n)\in\mathbb{N}^n$ with $2\leq a_1+\dots+a_n$, and any $i=1,\dots,n$.
\end{defn}
\noindent As generically chosen $(\lambda_1,\dots,\lambda_n)\in(\RR/\{0\})^n$ satisfies the $\mathbb{N}$-linearity condition, this requirement is not restrictive.

\begin{thm}\label{thm_conjugate_function_existence}
Let $X$ be an n-dimensional Riemannian manifold equipped with Riemannian metric $g$ and a Morse function $f$, $V=\nabla f$, and $p$ be a critical point of $f$.
Let \emph{Morse eigenvalues} $\lambda_1,\dots,\lambda_n$ of $p$ be eigenvalues of $g^{jk}\nabla_iV_j|_p\in T_pX\otimes T^*_p X$. Assume that Morse eigenvalues of $p$ satisfy the $\mathbb{N}$-linearity condition.\\
Then there exists a local coordinate chart on a small neighbourhood $\tilde U$ containing $p$, which is a function $\Phi:U\rightarrow \tilde U$ with $U\subset \RR^n$ open and $\Phi(\bo)=p$, such that $\Phi$ is smooth with a smooth inverse on $U$, and
\[\Phi^*(V)=V_0=\sum_{i=1}^n \lambda_ix_i\frac{\partial}{\partial x_i},\]
for all $\bx\in U$. 
\end{thm}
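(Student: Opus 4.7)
The plan is to conjugate $V$ to $V_0$ in three steps: normalize the $1$-jet, kill higher-order terms formally, and finally absorb the flat remainder using the hyperbolicity of $V_0$.

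\textbf{Step 1 (linearization of the $1$-jet).} I would start by choosing local coordinates $(y_1, \dots, y_n)$ centred at $p$ in which $g_{ij}(p) = \delta_{ij}$ and $\Hess(f)(p) = \operatorname{diag}(\lambda_1, \dots, \lambda_n)$. This is possible because $g^{jk}\nabla_i V_j|_p$ is self-adjoint with respect to $g_p$ and therefore has a real orthonormal eigenbasis, with real eigenvalues $\lambda_1, \dots, \lambda_n$, all nonzero since $p$ is non-degenerate. In these coordinates $V = V_0 + R$ with $R(y) = O(|y|^2)$.

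\textbf{Step 2 (formal linearization via the homological equation).} I would then cancel the higher-degree terms of $R$ one degree at a time by near-identity polynomial substitutions $y \mapsto y + \phi_k(y)$, where $\phi_k$ is homogeneous of degree $k \geq 2$. Writing $R = R_k + O(|y|^{k+1})$ for the leading homogeneous part, the pullback satisfies
$$(\mathrm{Id} + \phi_k)^*(V_0 + R) = V_0 + R_k - [V_0, \phi_k] + O(|y|^{k+1}),$$
so the obstruction to eliminating $R_k$ is the homological equation $[V_0, \phi_k] = R_k$. The Lie-bracket computation
$$[V_0, y^\alpha \partial_{y_j}] = \Bigl(\sum_{i=1}^n a_i \lambda_i - \lambda_j\Bigr) y^\alpha \partial_{y_j}$$
shows that $[V_0, \cdot]$ acts diagonally on monomial vector fields, and the $\mathbb{N}$-linearity condition forces the eigenvalues $\sum_i a_i \lambda_i - \lambda_j$ to be nonzero whenever $|\alpha| \geq 2$. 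Hence each homological equation is uniquely solvable on the space of degree-$k$ vector fields; composing the resulting transformations formally and invoking Borel's theorem on the obtained formal power series yields a smooth local diffeomorphism $\Phi_\infty$ with $\Phi_\infty^* V = V_0 + R_\infty$, where $R_\infty$ is flat at $\bo$.

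\textbf{Step 3 (removing the flat remainder).} The principal obstacle is the elimination of $R_\infty$, which no longer yields to power-series manipulation. I would attack it by Moser's deformation method: interpolate $V_t := V_0 + (1 - t) R_\infty$ for $t \in [0, 1]$ and seek a family of diffeomorphisms $\Psi_t$ with $\Psi_t^* V_t = V_0$. Differentiating in $t$ reduces this to finding a time-dependent generator $W_t$ satisfying
$$[V_t, W_t] = -R_\infty.$$
Since all $\lambda_i$ are nonzero, $V_0$ is hyperbolic at $\bo$, and $[V_0, \cdot]$ admits a right inverse on vector fields flat at the origin, obtained by integrating along the flow of $V_0$ separately on the stable and unstable directions and gluing. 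The flatness of $R_\infty$ then furnishes a smooth flat solution $W_t$; integrating it over $t \in [0, 1]$ gives a diffeomorphism $\Psi$ tangent to the identity to infinite order at $\bo$, and $\Phi := \Phi_\infty \circ \Psi$ is the chart sought. This last stage is the technical heart of the proof, parallel to the smoothness part of Sternberg's linearization theorem, and is where I expect most of the difficulty to lie.
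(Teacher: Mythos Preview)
Your Steps 1 and 2 coincide with the paper's opening reductions (Proposition~2.6 and Proposition~3.1): the paper likewise diagonalizes the linear part, solves the homological equation $[V_0,\cdot]=R_k$ degree by degree using the $\mathbb{N}$-linearity condition, and invokes Borel's theorem to obtain a smooth chart in which $V-V_0$ is flat at $\bo$.

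Your Step 3, however, diverges genuinely from the paper. You propose the Moser homotopy method, inverting $[V_t,\cdot]$ on flat vector fields and integrating the resulting time-dependent generator. The paper instead proceeds through three further coordinate refinements: it first arranges that $V=V_0$ exactly on the stable and unstable submanifolds (Propositions~3.4--3.5), then upgrades flatness to the product estimate $|V-V_0|(\bx)\le C|\bx_k|^\alpha|\bx_{n-k}|^\alpha$ (Theorem~3.6), and finally realizes the conjugation $\Phi=\lim_{T\to\infty}G_T F_{-T}$ as the fixed point of an explicit contraction operator $\CF_\bx$ on the weighted Sobolev space $L^p_{k,\delta}((-\infty,0];\RR^n)$, with smooth dependence on $\bx$ supplied by the Banach-space implicit function theorem. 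Your route is more conceptual and closer to classical presentations of Sternberg linearization; it bypasses the paper's intermediate normalizations and Sobolev machinery. The paper's route is more constructive---it produces $\Phi$ in closed form as a flow composite---but pays for this with the extra preparatory steps. The difficulty you single out, namely building a smooth right inverse to $[V_0,\cdot]$ on flat fields in the saddle case by ``integrating separately on stable and unstable directions and gluing,'' is real and is exactly the place where the paper's product estimate and weighted-Sobolev contraction do the heavy lifting; either way, controlling the mixed stable/unstable dynamics is the crux, and your sketch would need estimates of comparable strength to be made rigorous.
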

\noindent{\bf Remark}:
\begin{itemize}
\item If we assume in addition that $\lambda_1\geq \lambda_2\geq \dots \geq \lambda_n$, then $\Phi$ is unique up to linear transformation
    \[\bx\mapsto (A_{ij})_{i,j=1}^n\bx,\]
    where $A_{ij}=0$ if $i\neq j$; and if the Morse eigenvalue $\lambda_i$ is of multiplicity $m$, with $\lambda_i=\dots=\lambda_{i+m-1}$, then $(A_{ij})_{i,j=i}^{i+m-1}\in \mathrm{GL}(m,\RR)$.
\item When the set of Morse eigenvalues $\{\lambda_1,\dots,\lambda_n\}$ fails to satisfy the $\mathbb{N}$-linearity condition, there exist some gradient fields $V$ with standard form $V_0$ that cannot be standardized to $V_0$. This is shown in Corollary \ref{corl_N-linearity_is_key}. In Example \ref{example_non_N_linearity}, we demonstrate this conclusion for the gradient field $V=2(x_1+x_2^2)\frac{\partial}{\partial x_1}+x_2\frac{\partial}{\partial x_1}$.
\end{itemize}
\section{The standard form of a Morse gradient field.}
In this section, we shall briefly recall Morse theory and the Morse Lemma, and give a rigours definition of the standard form of a Morse gradient field.
\subsection{Introduction to Morse theory.}
Let $X$ be a $n$-dimensional manifold equipped with Riemannian metric $g$.
\begin{defn}[Morse function]
For a smooth function
$f:X\rightarrow \RR$,
its \text{\rm{critical points}} are those points $p\in X$ where $\nabla f(p)=0$, and a critical point is called \text{\rm non-degenerate} if the matrix $\Hess f(p)$ is of full rank. A function $f$ is called a \text{\rm{Morse function}}, if it has only non-degenerate critical points.
\end{defn}
\noindent The existence of Morse functions is guaranteed, and in fact, Morse functions are dense in $C^\infty(X;\RR)$, as the following result from \cite[\S 6]{Milnor63} elaborates:
\begin{thm}[Abundance of Morse functions]
Any bounded smooth function $\hat f:X\rightarrow\RR$ can be uniformly approximated by a smooth Morse function $f$. Furthermore, $f$ can be chosen so that the i-th derivatives of $f$ on the compact set $K$ uniformly approximate the corresponding derivatives of $\hat f$, for all $i\leq k$.
\end{thm}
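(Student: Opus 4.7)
The plan is to combine a local Morsification via Sard's theorem with an inductive patching over a locally finite coordinate cover of $X$. For the local step, work in a coordinate chart $(U,\phi)$ with coordinates $\bx$, and consider perturbations of a smooth function $f$ by linear terms $f(\bx) + \langle \ba, \bx\rangle$ for $\ba\in\RR^n$. Such a perturbation has a critical point at $\bx$ iff $\nabla f(\bx)=-\ba$, and its Hessian equals $\Hess f(\bx)$. Hence it is Morse on $U$ precisely when $-\ba$ is a regular value of the smooth map $\nabla f:U\to \RR^n$, and Sard's theorem supplies such $\ba$ of arbitrarily small magnitude.

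The second ingredient is an openness observation: on any compact set $K\subset X$, the property that every critical point of $f$ in $K$ is non-degenerate is open in the $C^2$-topology, since $|\nabla f|$ has a positive lower bound away from the finitely many critical points in $K$, and $\det\Hess f\neq 0$ is preserved by small perturbations at each critical point. To globalize, I pick a countable locally finite cover $\{V_i\}_{i\geq 1}$ of $X$ such that each $\overline{V_i}$ is compact and contained in a chart $(U_i,\phi_i)$, together with bump functions $\rho_i\in C_c^\infty(U_i)$ with $\rho_i\equiv 1$ on a neighbourhood of $\overline{V_i}$. Starting from $f_0=\hat f$, I inductively define $f_i=f_{i-1}+\rho_i\cdot\langle\ba_i,\phi_i\rangle$, choosing $\ba_i\in\RR^n$ so that: (a) $f_i$ remains Morse on $\overline{V_1}\cup\cdots\cup\overline{V_{i-1}}$, which is possible by the openness observation applied to $f_{i-1}$; (b) $f_{i-1}+\langle\ba_i,\phi_i\rangle$ is Morse on $\overline{V_i}$, where $\rho_i\equiv 1$, possible by the Sard step applied to $f_{i-1}$; and (c) $\|f_i-f_{i-1}\|_{C^k(K)}<\varepsilon/2^i$. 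Local finiteness ensures that near every point of $X$ only finitely many $f_i$ differ, so $f=\lim_i f_i$ is a well-defined smooth function, Morse on each $\overline{V_j}$ and hence on $X$, with $\|f-\hat f\|_{C^k(K)}<\varepsilon$.

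The main obstacle is the interaction between conditions (a) and (b): the linear term introduced to Morsify $\overline{V_i}$ must not reintroduce degeneracies on the already-treated compact region $\overline{V_1}\cup\cdots\cup\overline{V_{i-1}}$. The openness observation provides a quantitative tolerance $\delta_i>0$ within which the $C^2$-perturbation leaves the previous Morse structure intact, while the Sard step leaves $\ba_i$ completely free to be chosen with $|\ba_i|<\delta_i$ and simultaneously small enough to meet (c). Combining these via a diagonal choice of $\ba_i$ is the key estimate; the remaining verifications—smoothness of the limit, control of the $C^k$ norm on $K$, and Morseness on each compact piece—are routine consequences of summability and the local finiteness of the cover.
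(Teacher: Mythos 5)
The paper does not prove this theorem itself --- it quotes it from Milnor's \emph{Morse Theory}, \S 6. Your proposal reproduces the essential content of that classical argument (Sard's theorem for local Morsification by linear perturbations, $C^2$-openness of the Morse condition on compacta, and an inductive patching over a locally finite cover with bump functions) and is correct, so there is nothing substantively different to compare. Two small points of polish: the openness observation is better phrased in terms of a positive lower bound for $|\nabla f|^2+|\det\Hess f|^2$ on the compact set (rather than ``$|\nabla f|$ bounded below away from critical points,'' which leaves a gap near the critical points until you combine it with the Hessian statement, as you do); and condition (c) should also control $\|f_i-f_{i-1}\|_{C^0(X)}$, not merely $\|f_i-f_{i-1}\|_{C^k(K)}$, so that the uniform $C^0$-approximation claimed in the theorem holds on all of $X$ and not just on $K$. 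Both are routine to fix within your framework since each correction is supported in a single chart and scales linearly with $|\ba_i|$.
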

\noindent Let $f$ be a fixed Morse function. Its gradient $\nabla f$ is a well-defined vector field over $X$, and \textit{Morse flow} is the negative gradient flow induced by this vector field, which is the solution $G_\bx(t)$ to
\[\left\{ {\begin{array}{ll}
\dint G_\bx(t)=-\nabla f(G_\bx(t))\\
G_\bx(0)=\bx\\
\end{array} }, \right. \]
for any $\bx\in X$.
\begin{defn}[Index, stable and unstable manifold of a critical point]
Let $p$ be a critical point of Morse function $f$. Then the \text{\rm Morse index} of $p$ is the number of negative eigenvalues of  $\Hess f(p)$.\\
We define the \text{\rm stable manifold} of $p$ to be
\[W^s(p)={\bx\in X: \lim_{t\rightarrow +\infty}G_\bx(t)=p},\]
and the \text{\rm unstable manifold} of $p$ to be
\[W^u(p)={\bx\in X: \lim_{t\rightarrow -\infty}G_\bx(t)=p}.\]
\end{defn}
\noindent As the name suggests, $W^s(p)$ and $W^u(p)$ are indeed submanifolds of $X$. A detailed discussion along with proof can be found in \cite[\S2.1.d]{Audin14}, where a proof is given for so-called pseudo-gradient fields, vector fields that generalize $\nabla f$ and coincide with the gradient on a local chart containing the critical point. So here we just quote the result as follows.
\begin{thm}
The stable and unstable manifolds of the critical point $p$ are submanifolds of $X$ that are diffeomorphic to open disks. More over, we have
\[\dim W^u(p)=\mathrm{codim} W^s(p)=\mathrm{Index}(p).\]
\end{thm}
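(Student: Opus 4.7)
The plan is to recognize $p$ as a hyperbolic fixed point of the negative gradient flow $G$, apply the Hadamard--Perron stable manifold theorem locally, and then globalize via the flow. To establish hyperbolicity: since $\nabla f(p)=0$, the linearization of the vector field $-\nabla f$ at $p$ is $-g^{ij}(\Hess f)_{jk}|_p$, which, after a linear change of basis making $g|_p$ Euclidean, is conjugate to the symmetric operator $-\Hess f|_p$. By non-degeneracy of the Morse critical point and the definition of the Morse index $k=\mathrm{Index}(p)$, this operator has exactly $k$ positive and $n-k$ negative eigenvalues, yielding a splitting $T_pX = E^u \oplus E^s$ with $\dim E^u = k$ and $\dim E^s = n-k$, and no eigenvalue on the imaginary axis. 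So $p$ is a hyperbolic fixed point of $G$.

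Next I would invoke the Hadamard--Perron theorem to obtain smooth embedded local invariant submanifolds $W^u_{\mathrm{loc}}(p), W^s_{\mathrm{loc}}(p)$ in a neighbourhood of $p$, tangent at $p$ to $E^u$ and $E^s$ respectively, and each diffeomorphic to an open disk of the corresponding dimension. The global manifolds I would then define by $W^u(p) = \bigcup_{t \geq 0} G_t(W^u_{\mathrm{loc}}(p))$ and $W^s(p) = \bigcup_{t \geq 0} G_{-t}(W^s_{\mathrm{loc}}(p))$. Because $f$ strictly decreases along non-constant trajectories of $G$, the flow admits no periodic or recurrent orbits, so each $G_{\pm t}$ acts injectively on the corresponding local disk and the unions inherit the structure of injectively immersed submanifolds of dimensions $k$ and $n-k$. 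This immediately gives $\dim W^u(p) = \mathrm{codim}\, W^s(p) = \mathrm{Index}(p)$, as claimed. To identify each global manifold with an open disk, I would parametrize $W^u_{\mathrm{loc}}(p)$ by $E^u \cong \RR^k$ using the graph produced by the theorem, then for each $x \in W^u(p)$ use the unique $t \geq 0$ satisfying $G_{-t}(x) \in W^u_{\mathrm{loc}}(p)$ to produce a smooth bijection onto $W^u_{\mathrm{loc}}(p) \times [0,\infty)/(\partial W^u_{\mathrm{loc}} \times \{0\})$, and reparametrize the radial direction in $E^u$ by the entrance time to get a diffeomorphism with $\RR^k$. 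The construction for $W^s(p)$ is analogous.

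The central obstacle is the Hadamard--Perron theorem itself, which constructs $W^u_{\mathrm{loc}}, W^s_{\mathrm{loc}}$ as fixed points of a contraction mapping on a Banach space of Lipschitz graphs over the hyperbolic splitting $E^u \oplus E^s$. Once this is treated as a black box, as the paper already does by citing \cite{Audin14}, the remainder is bookkeeping: smooth dependence of $G_t$ on initial data, injectivity of $G_t$ on the local disks from absence of recurrence, and a careful reparametrization so the global manifolds come out diffeomorphic to open disks rather than some quotient. A subtle point worth flagging is that on a non-compact $X$, backward trajectories in $W^s(p)$ may escape every compact set, which is precisely why the diffeomorphism with $\RR^{n-k}$ should be phrased via the flow out of the bounded local disk rather than via an embedding of $\RR^{n-k}$ into $X$.
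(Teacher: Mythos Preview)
Your sketch is sound and follows the standard route---hyperbolicity of $p$ from non-degeneracy of $\Hess f$, local invariant manifolds via Hadamard--Perron, then globalization by flowing the local disks outward. However, the paper does not actually prove this theorem: it is stated as a quoted result, with the proof deferred entirely to \cite[\S2.1.d]{Audin14}. So there is no ``paper's own proof'' to compare against; your outline is essentially the argument one finds in that reference (phrased there for pseudo-gradient fields), and is more than the paper itself supplies.

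One small remark on your writeup: when you argue that the global union $\bigcup_{t\geq 0} G_t(W^u_{\mathrm{loc}})$ is injectively immersed, the relevant point is not that each $G_t$ is injective on the local disk (that is automatic since $G_t$ is a diffeomorphism), but that the induced map from the abstract model $W^u_{\mathrm{loc}}\times[0,\infty)/\!\sim$ into $X$ is injective. This is where monotonicity of $f$ along trajectories (hence no recurrence) is genuinely used, so you might phrase that step a bit more carefully.
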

\noindent The gateway of Morse theory is the Morse lemma, without doubt. The lemma reveals the fact that for every critical point there exists a local chart where the Morse function is of standard form characterized by its Morse index, and as a result, it is justified to consider the index of a critical point as an invariant of local homeomorphisms. This lemma validates the definition of the Morse complex, a chain complex defined over the set of critical points ranked with respect to their indices, and the definition of the corresponding Morse homology. Some classical upshots of Morse homology are \cite[Chapter 4]{Audin14}: a) the alternating sum of the number of critical points of index $k$ (sign alters w.r.t. $k$) of a Morse function is a topological invariant --- the Euler characteristic; and b) the number of critical points of index $k$ is bounded below by the $k$-th Betti number of the manifold.\\
The original lemma was proven by Marston Morse in his paper \cite{Morse25}, 1925, using Gram--Schmidt orthogonalization method. Later on it was generalized to firstly suit calculus of variations in Hilbert spaces and then for Banach spaces in general, by Richard Palais \cite{Palais69}. The lemma also has a variation for \textit{Morse--Bott functions}, which are smooth functions with their critical loci being submanifolds of $X$ instead of isolated points, and their Hessian at a critical point are non-degenerate along the normal of the corresponding critical locus.\\
For the convenience of the reader, let us revisit the classical version of Morse lemma, and its proof, seen here mainly paraphrasing \cite[\S 2]{Milnor63}.
\begin{lem}[Morse Lemma]
Let $p$ be a non-degenerate critical point of Morse function $f$, and the index of $p$ is $k$. Then there is a local coordinate $(x_1,\dots,x_n)$ on a neighbourhood $U$ containing $p$, with $p$ associated with $\bo$. On this local coordinate, $\forall \bx\in U$,
\[f(\bx)=f(\bo)-x_1^2-\dots-x_k^2+x_{k+1}^2+\dots+x_n^2.\]
\end{lem}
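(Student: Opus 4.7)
The plan is to follow Milnor's version of the proof, which has two main phases: first write $f$ near $p$ as a quadratic expression $\sum_{i,j} x_i x_j H_{ij}(\mathbf{x})$ where the coefficient matrix specializes to the Hessian at $p$, then simultaneously diagonalize this family by an iterative ``completing the square'' change of coordinates. Throughout I will work in an arbitrary coordinate chart centred at $p$ with $p\mapsto \mathbf{0}$, and assume without loss of generality that $f(p)=0$.

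First I would apply Hadamard's lemma twice. Since $f(\mathbf{0})=0$, writing $f(\mathbf{x})=\int_0^1 \tfrac{d}{dt}f(t\mathbf{x})\,dt$ and pulling out the factor of $x_i$ gives $f(\mathbf{x})=\sum_i x_i g_i(\mathbf{x})$ with $g_i(\mathbf{x})=\int_0^1 \frac{\partial f}{\partial x_i}(t\mathbf{x})\,dt$ smooth. Because $p$ is critical, $g_i(\mathbf{0})=\frac{\partial f}{\partial x_i}(\mathbf{0})=0$, so the same device applied to each $g_i$ yields $g_i(\mathbf{x})=\sum_j x_j h_{ij}(\mathbf{x})$ and hence $f(\mathbf{x})=\sum_{i,j} x_i x_j h_{ij}(\mathbf{x})$. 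After symmetrising via $H_{ij}:=\tfrac12(h_{ij}+h_{ji})$, the matrix $(H_{ij}(\mathbf{x}))$ is symmetric and smooth, and a direct computation gives $H_{ij}(\mathbf{0})=\tfrac12\frac{\partial^2 f}{\partial x_i\partial x_j}(\mathbf{0})$, so $(H_{ij}(\mathbf{0}))$ is proportional to the Hessian and in particular nondegenerate.

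Next I would diagonalise $(H_{ij}(\mathbf{x}))$ in a Gram--Schmidt style induction on $n$. After a linear change of coordinates I may assume $H_{11}(\mathbf{0})\neq 0$, hence $H_{11}(\mathbf{x})\neq 0$ on some neighbourhood of $\mathbf{0}$ and has constant sign $\varepsilon_1=\pm 1$. Completing the square and introducing
\[ y_1(\mathbf{x}) \;=\; \sqrt{|H_{11}(\mathbf{x})|}\,\Bigl(x_1+\sum_{j>1}\frac{H_{1j}(\mathbf{x})}{H_{11}(\mathbf{x})}x_j\Bigr),\qquad y_j(\mathbf{x})=x_j\;\text{for } j>1,\]
absorbs all terms involving $x_1$ into $\varepsilon_1 y_1^2$, leaving $f=\varepsilon_1 y_1^2+\sum_{i,j\ge 2}y_iy_j\tilde H_{ij}(\mathbf{y})$ with $(\tilde H_{ij})$ smooth, symmetric, and nondegenerate at the origin (since the quadratic form's rank is preserved). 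I would check that $(y_1,\ldots,y_n)$ is a valid coordinate chart by computing $D(y_1,\ldots,y_n)/D(x_1,\ldots,x_n)$ at $\mathbf{0}$, which is triangular with nonzero diagonal. Iterating this construction on the remaining variables produces coordinates in which $f=\sum_i \varepsilon_i y_i^2$ with each $\varepsilon_i=\pm 1$.

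Finally I would reorder the $y_i$ so the negative signs come first and count them. The Hessian of $f$ in the new coordinates is diagonal with entries $\pm 2$, and since the number of negative eigenvalues of a real symmetric quadratic form is invariant under change of basis (Sylvester's law of inertia), this number must equal the Morse index $k$. Relabelling the coordinates gives the claimed standard form. The only subtle step to watch is verifying smoothness of $y_1$: this requires $|H_{11}(\mathbf{x})|$ to be bounded away from zero near $\mathbf{0}$, which holds by continuity, so $\sqrt{|H_{11}|}$ is smooth and the change of coordinates is a legitimate local diffeomorphism by the inverse function theorem. The rest is bookkeeping.
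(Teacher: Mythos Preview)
Your proof is correct and follows essentially the same approach as the paper: both reproduce Milnor's argument, applying Hadamard's lemma twice to write $f=\sum_{i,j}x_ix_jh_{ij}(\mathbf{x})$ with $(h_{ij}(\mathbf{0}))$ proportional to the Hessian, then diagonalising by an inductive completing-the-square change of coordinates. Your version is slightly more explicit about the Jacobian check and about invoking Sylvester's law of inertia to identify the number of minus signs with the Morse index, but the method is the same.
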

\begin{proof}
Firstly, we claim that the following is true.
\begin{addmargin}[1em]{.5em}
\textbf{Claim}: Let $F$ be a $C^\infty$ function in a convex neighbourhood $V$ of $\bo$ in $\RR^n$, with $F(\bo)=0$. Then
\[F(\bx)=\sum_{i=1}^n x_ig_i(\bx)\]
for some $g_i(\bx)\in C^\infty(V;\RR)$, and $g_i(\bo)=\frac{\partial f}{\partial x_i}(\bo)$.\\
\textit{Proof of claim}. Note that
\[F(\bx)=\int_0^1 \frac{d F(tx_1,\dots,tx_n)}{d t}dt=\int_0^1 \sum_{i=1}^n \frac{\partial F(tx_1,\dots,tx_n)}{\partial x_i} x_i\, dt.\]
Therefore we can always let $g_i(\bx)=\int_0^1 \frac{\partial F(tx_1,\dots,tx_n)}{\partial x_1} dt$, and the claim follows.
\end{addmargin}
Apparently there exists a local coordinate where the critical point $p$ is mapped to $\bo$ in $\RR^n$, and we can assume that $f(\bo)=0$ on this local chart. Applying aforementioned claim to $f$ yields
\[f(\bx)=\sum_{i=1}^nx_ig_i(\bx)\]
for $\bx$ in some neighbourhood of $\bo$. Since $\bo$ is the critical point,
\[g_i(\bo)=\frac{\partial f(\bo)}{\partial x_i}=0.\]
Once again, applying the claim to $g_i(\bx)$, we have
\[g_i(\bx)=\sum_{j=1}^n x_j h_{ij}(\bx),\]
for some smooth functions $h_{ij}$. So
\begin{equation}\label{eqn_non_linear_Hess_f}
f(\bx)=\sum_{i,j=1}^nx_ix_jh_{ij}(\bx).
\end{equation}
We assume that $h_{ij}(\bx)=h_{ji}(\bx)$, as taking $\hat h_{ij}=\hat h_{ji}=\frac{1}{2} (h_{ij}+h_{ji})$ reduces the case to our assumption. Moreover, the matrix $(h_{ij}(\bo))=(\frac{1}{2}\frac{\partial^2 f(\bo)}{\partial x_i \partial x_j})$, and hence it is non-singular.\\
The coordinate where $f$ is of the desired form, possibly on a smaller neighbourhood $U_1$ of $\bo$, is constructed inductively. Suppose there exists a coordinate $(u_1,\dots,u_n)$ on $U_1$ where
\[f=\pm u_1^2 \pm \dots \pm u_{r-1}^2+\sum_{i,j\geq r} u_iu_jH_{ij}(u_1,\dots,u_n),\]
with the matrix $(H_{ij})$ symmetric. After a linear exchange in the last $n-r+1$ coordinates, we may assume that $H_{rr}(\bo)\neq 0$. Now introduce the new coordinate $(v_1,\dots,v_n)$ as
\begin{align}
\nonumber v_i&=u_i\text{ for }i\neq r,\\
\nonumber v_r(\bu)&=\sqrt{|H_{rr}(\bu)|}\left[u_r+ \sum_{i>r} u_i H_{ir}(\bu)/H_{rr}(\bu)\right].
\end{align}
On a small enough neighbourhood $U_2\subset U_1$ of the origin, $(v_1,\dots,v_n)$ is a smooth transformation with a smooth inverse, hence it will serve as a coordinate on a sufficiently small neighbourhood $U_3$. It is easy to verify that
\[f=\sum_{i\leq r}\pm v_i^2+\sum_{i,j >r} v_iv_j H_{ij}'(v_1,\dots,v_n),\]
which completes the induction, and proves this lemma.
\end{proof}
\subsection{First observations, and the standard form of a gradient field.}\label{subsec_standard_form}
\noindent The Morse lemma offers a constructive way of finding a local coordinated standardizing the Morse function. However, it is less helpful when the Morse gradient field is of our concern. In fact,
\begin{equation}\label{eqn_nabla_f}
\nabla f(\bx)=\sum_{i,j=1}^n g^{ij}\frac{\partial f}{\partial x_j}\frac{\partial}{\partial x_i},
\end{equation}
where $g^{ij}=(g^{-1})_{ij}$. So the local vector field may appear fully non-linear, even if $f$ is standard in the sense of the lemma.\\
What do we expect for the Morse vector field on a local coordinate near the critical point? A first look yields that we can always set the first order terms of $\nabla f$ to be diagonal, and the coefficient of this diagonal form is coordinate independent:
\begin{proposition}\label{prop_local_form_with_error}
There exists a local coordinate $(x_1,\dots,x_n)$ on a local neighbourhood $U$ containing the critical point $p$ where $p$ is mapped to $\bo$, and
\[\nabla f(\bx)=\sum_{i=1}^n \lambda_ix_i\frac{\partial}{\partial x_i}+O(\|\bx\|^2)\frac{\partial}{\partial x_i},\]
where $\lambda_1,\dots,\lambda_k<0$, $\lambda_{k+1},\dots,\lambda_n>0$, with $k$ the Morse index of $p$.\\
 Furthermore, the set of eigenvalues $\{\lambda_1,\dots,\lambda_n\}$ is independent of the choice of coordinates.
\end{proposition}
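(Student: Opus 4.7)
My plan is to reduce the statement to a linear-algebraic simultaneous diagonalisation of the metric $g|_p$ and the Hessian $\Hess f(p)$. First I would pick any local chart centred at $p$ with $p\mapsto\bo$, and Taylor-expand
\[
\frac{\partial f}{\partial x_j}(\bx)=\sum_{k=1}^n H_{jk}x_k+O(\|\bx\|^2),\qquad g^{ij}(\bx)=g^{ij}(\bo)+O(\|\bx\|),
\]
where $H_{jk}=\frac{\partial^2 f}{\partial x_j\partial x_k}(\bo)$ is the (symmetric and non-singular) Hessian matrix. Substituting into \eqref{eqn_nabla_f} and collecting orders shows that the linear part of $\nabla f$ is $\sum_{i,k}(g^{ij}(\bo)H_{jk})x_k\,\partial/\partial x_i$, so the task reduces to finding a linear coordinate change that diagonalises the matrix $A=g^{-1}(\bo)H$.

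The key observation is that $A$, viewed as an endomorphism of $T_pX$, is self-adjoint with respect to the inner product $g|_p$: indeed $g(Au,v)=H(u,v)$ and $H$ is symmetric. Hence $A$ admits a $g|_p$-orthonormal eigenbasis $e_1,\dots,e_n$ with real eigenvalues $\lambda_1,\dots,\lambda_n$. Taking $T$ to be the matrix whose columns are these $e_i$ and setting $\bx=T\by$ gives $g^{ij}(\bo)=\delta^{ij}$ and $H=\mathrm{diag}(\lambda_1,\dots,\lambda_n)$ in the new coordinates, so the computation above yields
\[
\nabla f(\by)=\sum_{i=1}^n\lambda_iy_i\frac{\partial}{\partial y_i}+O(\|\by\|^2)\frac{\partial}{\partial y_i}
\]
as claimed. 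Since $A$ is invertible no $\lambda_i$ vanishes; Sylvester's law of inertia applied to $H$ in the basis where $g=I$ identifies the number of negative $\lambda_i$ with the negative inertia of $\Hess f(p)$, i.e.\ the Morse index $k$, and a reordering of the basis vectors handles the sign convention.

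For the coordinate-invariance of $\{\lambda_1,\dots,\lambda_n\}$, I would note that $A$ has the intrinsic description $v\mapsto\nabla_v(\nabla f)|_p$: at a critical point any two connections differ by a tensor contracted with $\nabla f=\bo$, so this map is independent of the chosen connection and hence of coordinates; equivalently, under a coordinate change by Jacobian $J$ the matrix $g^{-1}H$ transforms as $J^{-1}(g^{-1}H)J$, a similarity, so its spectrum is an invariant of the pair $(f,g)$ at $p$. The only genuine input beyond routine Taylor expansion is the $g$-self-adjointness of $A$, which underwrites the simultaneous diagonalisation of a symmetric form with a positive-definite one; so I do not anticipate a serious obstacle in this first-order analysis. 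The real work of the paper lies in improving this result from the quoted error term $O(\|\bx\|^2)$ to an exact equality, which is what Theorem \ref{thm_conjugate_function_existence} accomplishes under the $\mathbb N$-linearity hypothesis.
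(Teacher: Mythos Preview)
Your proposal is correct and follows essentially the same route as the paper: both identify the linear part of $\nabla f$ as governed by the endomorphism $g^{-1}(\bo)\Hess f(\bo)$, diagonalise it via a linear coordinate change using the simultaneous diagonalisation of a symmetric form against a positive-definite one (you phrase this as $g$-self-adjointness, the paper as the standard two-symmetric-matrices fact), and then argue invariance of the eigenvalues from the similarity transformation $g^{-1}H\mapsto J^{-1}(g^{-1}H)J$ under coordinate change. The only cosmetic difference is that the paper obtains the quadratic expansion of $f$ from the integral representation \eqref{eqn_non_linear_Hess_f} rather than a direct Taylor expansion, but the content is identical.
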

\begin{proof}
A closer look at equation \eqref{eqn_non_linear_Hess_f} tells us that
\[f(\bx)=\sum_{i,j=1}^nx_ix_jh_{ij}(\bx), \text{ with }h(\bo)=\frac{1}{2}\Hess f(\bo),\text{ where }h(\bx):=(h_{ij}(\bx)).\]
The Hessian as well as the metric are symmetric bilinear forms over $T_\bx X$. Let $B:T_\bx X\times T_\bx X\rightarrow \RR$ be such a bilinear form, then a coordinate transformation of the local coordinate $\bx\mapsto \by$ induces a congruence of the matrix of $B$ w.r.t. Jacobian of the coordinate change, i.e.
\begin{align}
\nonumber B(\by(\bx))=J^T B(\bx) J&, \text{ where } J=\left(\frac{\partial y_i(\bx)}{\partial x_j}\right),\\
\nonumber B^{-1}(\by(\bx))=J^{-1}B^{-1}(\bx)(J^{-1})^T&, \text{ when }J\text{ is invertible}.
\end{align}
Consequently, an invertible linear transformation $\by= A\bx$ gives the matrix similarity
\[g^{-1}|_{\by=\bo}\,\Hess f|_{\by=\bo}=A^{-1} (g^{-1}|_{\bx=\bo}\,\Hess f|_{\bx=\bo})A.\]
Let $A$ be the positive definite matrix that diagonalizes both $g^{-1}(\bo)$ and $\Hess f(\bo)$, such that $g^{-1}|_{\by=\bo}\,\Hess f|_{\by=\bo}=\mathrm{diag}\, \{\lambda_1,\dots,\lambda_n\}$. The existence of $A$ is guaranteed by the fact that two symmetric matrices, with one of which being positive definite, can be diagonalized simultaneously. On this new coordinate,
\begin{equation}\label{eqn_g_inverse_h_matrix}
(g^{-1}(\by)h(\by))_{ij}=\sum_{k=1}^n g^{ik}(\by)h_{kj}(\by)= \frac{1}{2}\lambda_i\delta_{ij}+O(\|\by\|).
\end{equation}
Equation \eqref{eqn_non_linear_Hess_f} gives
\begin{align}
\nonumber \frac{\partial f}{\partial y_j}&=\sum_{k=1}^n 2y_kh_{jk}(\by)+\sum_{k,l=1}^n y_ky_l\frac{\partial h_{kl}(\by)}{\partial y_j}
\end{align}
 and combining this with \eqref{eqn_nabla_f} and \eqref{eqn_g_inverse_h_matrix},
\begin{align}
\nonumber \nabla f(\by)&=\sum_{i,j=1}^n \left(g^{ij}\sum_{k=1}^n 2y_kh_{jk}(\by)+O(\|\by\|^2)\right)\frac{\partial }{\partial y_i}\\
\nonumber &=\sum_{i,k=1}^n \left(2y_k\sum_{j=1}^n g^{ij}h_{jk}(\by)+O(\|\by\|^2)\right)\frac{\partial }{\partial y_i}=\sum_{i=1}^n \lambda_iy_i\frac{\partial}{\partial y_i}+O(\|\by\|^2)\frac{\partial}{\partial y_i}.
\end{align}
As signs are preserved during our construction, we may assume that $\lambda_1,\dots,\lambda_k<0$ and $\lambda_{k+1},\dots,\lambda_n>0$, where $k$ is the Morse index of $p$.\\
Furthermore, for the vector field $V(\bx)=\nabla f(\bx)\in C^\infty(T_\bx X)$, the Hessian $\Hess f(\bx)=\nabla V\in C^\infty (T^*_\bx X\otimes T_\bx X)$. This indicates that $\{\lambda_1,\dots,\lambda_n\}$ is the set of eigenvalues of $g^{-1}\Hess f|_p$, and it is invariant under local diffeomorphisms. Or this can be viewed directly from our proof: a local diffeomorphism $\phi$ induces matrix similarity for the matrix of coefficients of the leading order term of $V$, with its Jacobi $J\phi(\bo)$, which apparently doesn't alter the set of eigenvalues.
Thus the proposition is proven.
\end{proof}

\noindent And we give the linear part of $\nabla f$ a name.
\begin{defn}[Standard form]
For a critical point $p$ of a Morse function $f$, there exists a local coordinate over a neighbourhood $U$ containing $p$, such that
\[\nabla f(\bx)=\sum_{i=1}^n \lambda_ix_i\frac{\partial}{\partial x_i}+O(\|\bx\|^2)\frac{\partial}{\partial x_i},\forall \bx\in U.\]
The linear part of $\nabla f$ is called the {\rm{standard form of $\nabla f$}}, denoted by
\[V_0(\bx)=\sum_{i=1}^n \lambda_ix_i\frac{\partial}{\partial x_i}.\]\\
The set of eigenvalues $\{\lambda_1,\dots,\lambda_n\}$ is coordinate independent, and they will be called {\rm Morse eigenvalues} of the critical point $p$.
\end{defn}
\noindent
The discussion above leads to the natural question: Given a Morse function $f$ on a Riemannian manifold $X$, is there a local coordinate system on a neighbourhood $U$ containing the critical point $p$, with $p$ to the origin, such that $\nabla f$ reduces to its standard form
throughout $U$?\\
In general, the answer is: yes, if the problem is generically posed, then such local coordinates exist. For the rest of the paper, we will see what ``generic'' means in strict, analytical sense, and give a constructive proof of such a local coordinate in this case.
\section{Proof of main results.}
Here is a roadmap to help the reader navigate the proof of our results:
\begin{enumerate}[Step 1:]
\item \S \ref{subsubsec_3.1.1}. Under the $\mathbb{N}$-linearity assumption, we find a local coordinate $(x_1,\dots,x_n)$ where the general vector field differs from its standard form only by a locally flat function, namely $|V-V_0|(\bx)=O(|\bx|^\infty)$. This is shown in Proposition \ref{prop_lem_standardizing_on_WuWs}. Moreover, when $\mathbb{N}$-linearity fails, Corollary \ref{corl_N-linearity_is_key} proves that there exists a gradient field $V$ that cannot be standardized, illustrated by Example \ref{example_non_N_linearity}, and both demonstrate that the restriction of $\mathbb{N}$-linearity condition is sufficient and almost necessary in this process.
\item \S\ref{subsubsec3.1.2}. There exist local coordinates where the unstable manifold corresponds to $(x_1,\dots,x_k,0,\dots,0)$, the stable manifold $(0,\dots,0,x_{k+1},$ $\dots,x_n)$; furthermore, the relation $|V-V_0|=O(|\bx|^\infty)$ is preserved. This is Proposition \ref{lem_coordinate_out_of_WuWs}.
\item Based on the first two results, we modify the choice of local coordinates further, and construct a coordinate chart where $V=V_0$ on both the unstable submanifold, which corresponds to $(\bx_k,\bo)$, and the stable submanifold, which is $(\bo,\bx_{n-k})$; on top of that, $|V-V_0|=O(|\bx|^\infty)$ is preserved within a small neighbourhood of the critical point. This is proven in Proposition \ref{lem_v=v_0atWuWs}.
\item \S \ref{subsubsec3.1.3}. In Theorem \ref{thm_estimate_V-V_0_WuWs}, the final estimate of this subsection \S \ref{subsec_2.1} is given, and it will prove to be crucial later: on a small neighbourhood containing $p$, there exists a coordinate chart such that for every large enough $\alpha$ and some positive constants $C=C(\alpha)$, we have $|V-V_0|\leq C|\bx_k|^\alpha|\bx_{n-k}|^\alpha$.
\item In \S \ref{subsubsec3.2.2}, we present an operator $\CF: U\times L^p_{k,\delta}((-\infty,0];\RR^n)\rightarrow L^p_{k,\delta}((-\infty,0];$ $\RR^n)$ with its fixed point closely related to the local diffeomorphism $\Phi$ of our concern. The norm of operator $\CF$ relies closely on the weighted Sobolev spaces which it is defined on, as explained in Proposition \ref{lem_CF_shrink_in_range} and Lemma \ref{lem_integration_with_delta}; also, the operator $\CF$ moves the zero function in a controlled manner, which is discussed in Proposition \ref{lem_CF_drifting_domain}.
\item By carefully choosing regularity and weight of weighted Sobolev spaces, we find a convex region of the function space where $\CF$ is a contraction operator. This is shown in Theorem \ref{thm_CF_is_contracting_on_Omega}.
\item In \S \ref{subsubsec_proof_of_main}, the local diffeomorphism $\Phi$ is constructed from the fixed point of $\CF$, and its uniqueness and regularity are validated by a Banach space version of the Implicit Function Theorem. And we complete the proof of Theorem \ref{thm_conjugate_function_existence}.
\end{enumerate}
\subsection{Standardizing the vector field with controlled errors.}\label{subsec_2.1}
In this section, we will standardize the vector field of the gradient by finding suitable local coordinates centred at the the critical point, on which the gradient reduces to a form which is reasonably close to its standard form.
\subsubsection{Standardising generic $(V-V_0)$ to flat functions.}
\label{subsubsec_3.1.1}
To begin with, we will establish that we can find submanifold diffeomorphisms such that a general gradient field is arbitrarily close to its standard form.
\begin{proposition}\label{prop_lem_standardizing_on_WuWs}
Let $V(\bx)=\nabla f(\bx)=\sum_{i=1}^n (\lambda_ix_i+O(\|\bx\|^2))\frac{\partial}{\partial x_i}$ and $V_0$ be the standard form of $V$. Assume that $\lambda_1,\dots,\lambda_n$ satisfy the $\mathbb{N}$-linearity condition. Then there exists a local coordinate chart $(x_1,\dots,x_n)$, with the critical point $p$ mapped to $\bo$, such that
\[V(\bx)=V_0(\bx)+\sum_{i=1}^n O(|\bx|^\infty)\frac{\partial}{\partial x_i}.\]
%
\end{proposition}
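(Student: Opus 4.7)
The plan is a Poincar\'e-style normal-form argument: eliminate the discrepancy $V-V_0$ order by order in the Taylor expansion at $\bo$ by a sequence of near-identity polynomial coordinate changes, and then assemble the resulting formal power series into a genuine smooth coordinate change via Borel's theorem on prescribed Taylor expansions. Starting from the expansion
\[V(\bx)=V_0(\bx)+\sum_{k\geq 2}R_k(\bx),\qquad R_k(\bx)=\sum_{|\alpha|=k}\sum_{i=1}^n c_{\alpha,i}\,x_1^{a_1}\cdots x_n^{a_n}\,\frac{\partial}{\partial x_i},\]
I would seek a formal transformation $\Phi(\bx)=\bx+\sum_{k\geq 2}\phi_k(\bx)$, where each $\phi_k$ is a vector-valued homogeneous polynomial of degree $k$, such that the pullback satisfies $\Phi^*V=V_0$ as a formal power series around $\bo$.

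At order $k\geq 2$, matching coefficients in the identity $V(\Phi(\bx))=d\Phi_\bx\cdot V_0(\bx)$ reduces to a \emph{homological equation}
\[[V_0,\phi_k]=R_k+Q_k,\]
where $Q_k$ depends only on the already-chosen $\phi_2,\ldots,\phi_{k-1}$ and on $R_2,\ldots,R_{k-1}$. A direct Lie-bracket computation shows that on monomial vector fields,
\[\bigl[V_0,\,x_1^{a_1}\cdots x_n^{a_n}\tfrac{\partial}{\partial x_i}\bigr]=\bigl(a_1\lambda_1+\cdots+a_n\lambda_n-\lambda_i\bigr)\,x_1^{a_1}\cdots x_n^{a_n}\tfrac{\partial}{\partial x_i},\]
so $\phi\mapsto[V_0,\phi]$ is diagonal in the monomial basis with eigenvalues $\sum_j a_j\lambda_j-\lambda_i$. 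Definition \ref{defn_N_linearity} asserts precisely that these eigenvalues are nonzero whenever $a_1+\cdots+a_n\geq 2$, so each homological equation admits a unique solution $\phi_k$ read off coefficient-by-coefficient. Iterating over $k$ produces a formal power series $\Phi$ with identity linear part that eliminates $V-V_0$ to all formal orders.

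Finally, I would invoke Borel's theorem to produce a smooth map $\hat\Phi$ on a neighbourhood of $\bo$ in $\RR^n$ whose $\infty$-jet at $\bo$ agrees with the formal series $\Phi$ constructed above. Since the linear part of $\hat\Phi$ at $\bo$ is the identity, $\hat\Phi$ is a local diffeomorphism onto its image by the inverse function theorem, and hence serves as the desired coordinate chart on a sufficiently small neighbourhood. By construction, the Taylor expansion of $\hat\Phi^*V-V_0$ at $\bo$ vanishes, so $\hat\Phi^*V-V_0$ is flat at $\bo$, which is precisely the estimate $\sum_i O(|\bx|^\infty)\tfrac{\partial}{\partial x_i}$ claimed in the statement. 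The main obstacle is purely algebraic: carrying out the Lie-bracket eigenvalue computation cleanly and recognising that the $\mathbb{N}$-linearity condition is exactly the non-resonance condition required to invert $[V_0,\cdot]$ on homogeneous polynomials of degree $\geq 2$. Once this is in place, both the inductive order-by-order construction and the passage from formal to smooth via Borel are essentially standard.
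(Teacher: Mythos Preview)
Your proposal is correct and shares the same skeleton as the paper's proof: both rely on the Lie-bracket computation $[V_0,x^\alpha\partial_i]=(\sum_j a_j\lambda_j-\lambda_i)x^\alpha\partial_i$, identify the $\mathbb{N}$-linearity condition as the non-resonance hypothesis making these eigenvalues nonzero, build a formal power-series coordinate change order by order, and then invoke Borel's theorem to realise the formal series by a genuine smooth diffeomorphism with identity linear part.

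The packaging of the middle step differs slightly. You solve the homological equation $[V_0,\phi_k]=R_k+Q_k$ directly at each degree, which is the classical Poincar\'e normal-form route and is fully constructive. The paper instead phrases the same step in Lie-group language: it lets the finite-dimensional quotient group $\Diff(\RR^n)_\bo^{2,l}$ act on the space $\CV^l_{\lambda_1,\dots,\lambda_n}$ of $l$-jets of vector fields with linear part $V_0$, and argues that the orbit of $V_0$ is the whole space. This is done by showing the stabiliser has trivial Lie algebra (precisely your eigenvalue computation), so the orbit is open by dimension count, and then using a dilation $\bx\mapsto\epsilon\bx$ to drag any $l$-jet into that open orbit. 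Your direct inversion of $[V_0,\cdot]$ is more elementary and gives the transformation explicitly; the paper's orbit argument is more geometric but ultimately encodes the same linear algebra. Both approaches converge on the identical formal-series-plus-Borel endgame.
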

\begin{proof}
Let $(x_1,\dots,x_n)$ be a local coordinate chart on a neighbourhood of the critical point $p$, with the critical point mapped to $\bo$.\\
We now work with \emph{germs} near $\bo$ on $\RR^n$. A germ of functions at $\bo$ in $\RR^n$, denoted by $C^\infty(\RR^n)_\bo$, is the equivalence class of functions that are identical near $\bo$: Let $(U,f)$ be a pair, where $\bo\in U\subset\mathbb{R}^n$, $U$ open, and function $f:U\rightarrow \RR$ be smooth; then two such pairs $(U_1,f_1)$ and $(U_2,f_2)$ are equivalent if there exists open neighbourhood $U_3\subset U_1\cap U_2$ with $\bo\in U_3$, such that $f_1|_{U_3}=f_2|_{U_3}$.\\
\emph{The germ of diffeomorphisms of $\RR^n$ fixing $\bo$} is defined with the equivalence relation as follows: For triplets of the form $(U,V,\phi)$, where $U,V\subset \RR^n$ are open with $\bo\in U,V$, and $\phi:U\rightarrow V$ is a diffeomorphism with $\phi(\bo)=\bo$, we say that $(U_1,V_1,\phi_1)$ and $(U_2,V_2,\phi_2)$ are equivalent if there exists an open set $U_3\subset U_1\cap U_2$, $\bo\in U_3$, such that $\phi_1|_{U_3}=\phi_2|_{U_3}$. The germ of smooth diffeomorphisms of $\RR^n$ fixing the origin, denoted by $\Diff (\RR^n)_\bo$, is an infinite dimensional Lie group. The Lie group structure of this group of germs and its Lie algebra are studied by Robart and Kamran in \cite[Theorem 3]{Robart97}.\\
The group of germs of $C^\infty$ diffeomorphisms fixing the origin, denoted by $\Diff(\RR^n)_\bo$, has nested normal subgroups $\Diff(\RR^n)_\bo^l, l\geq 2$. Each $\Diff(\RR^n)_\bo^l$ consists of those diffeomorphisms of the form $\sigma_l(\bx)= Id(\bx) + p_l(\bx)$, where $p_l:\RR^n\rightarrow \RR^n$ is a $n$-vector of polynomials of order bigger than or equal to $l$. Then these infinite dimensional normal subgroups are nested as $\Diff(\RR^n)_\bo^2\supset \Diff(\RR^n)_\bo^3 \supset \dots$. In addition, for $m<l$, the quotient of subgroups can be defined $\Diff(\RR^n)_\bo^{m,l}=\Diff(\RR^n)_\bo^m/\Diff(\RR^n)_\bo^l$. This quotient space is a finite dimensional Lie group, with dimension,
\[\dim \Diff(\RR^n)_\bo^{m,l}=n\left(
\left(
  \begin{array}{c}
    n+m-1 \\
    m \\
  \end{array}
\right)
+\dots +
\left(
  \begin{array}{c}
    n+l-2 \\
    l-1 \\
  \end{array}
\right)
\right).\]
As the error of $V$ compared to its standard form $V_0$ is of at least second order, we will work with $\Diff (\RR^n)_\bo^{2,l}$, $l>2$.\\
In the same manner, we define the vector space $\CV_{\lambda_1,\dots,\lambda_n}$ of germs of the vector fields of the same standard form $V_0$ as the equivalent class of vector fields that agrees on an open neighbourhood containing the origin, namely,
\[\CV_{\lambda_1,\dots,\lambda_n}=\left\{v=V_0+\sum_{i=1}^n O(\|\bx\|^2)\frac{\partial}{\partial x_i}\right\}/\sim\]
 where $v\sim w$ if there exists an open neighbourhood $U$ of $\bo$ such that $v|_U=w|_U$. Apparently, $\CV_{\lambda_1,\dots,\lambda_n}$ is an infinite dimensional vector space with its origin being the germ of $V_0$.\\
Germs of vector fields in the vector space $\CV_{\lambda_1,\dots,\lambda_n}$ that are identical up to rank $l$ form another vector space,
\[\CV_{\lambda_1,\dots,\lambda_n}^l=\CV_{\lambda_1,\dots,\lambda_n}/\stackrel{l}{\approx},\]
where for $v,w \in \CV_{\lambda_1,\dots,\lambda_n}$, $v\stackrel{l}{\approx} w$ iff $v/O(|\bx|^{l})=w/O(|\bx|^{l})$.\\
Vector space $\CV_{\lambda_1,\dots,\lambda_n}^l$ is finite dimensional, and it is easy to see that $\dim \CV_{\lambda_1,\dots,\lambda_n}^l$ $ = \dim \Diff (\RR^n)_\bo^{2,l}$.\\
The action of the diffeomorphism group $\Diff(\RR^n)_\bo^{2,l}$ on $\CV_{\lambda_1,\dots,\lambda_n}^l$ is well-defined. The lemma is proven if for all $l$, vector field $V$ is in the orbit of the standard form $V_0$ up to rank $l$, namely if the orbit contains $w_l=\frac{V}{O(|\bx|^l)}$. If this is true for some $l$, then we get the existence of a diffeomorphism $\psi_l$ that ``straightens'' $V$ up to order $l$, namely $\psi_l^*(w_l)=V_0$.\\
The vector field $w_l$ is in the orbit of $V_0$ if $\Diff(\RR^n)_\bo^{2,l}$ acts transitively on $\CV_{\lambda_1,\dots,\lambda_n}^l$. In other words, let $G$ be the subgroup that fixes $V_0$, then we claim that the orbit of $V_0$ is the whole set of $\CV_{\lambda_1,\dots,\lambda_n}^l$ if and only if $G$ is trivial.
\begin{addmargin}[1em]{.5em}
Here is why this claim is true. \\
Firstly, that the stabilizer $G$ is trivial is equivalent to $\dim G=0$, and is equivalent to that the orbit of $V_0$, $\mathcal{O}(V_0)$, is open in $\CV_{\lambda_1,\dots,\lambda_n}^l$. This is because as a submanifold of $\CV_{\lambda_1,\dots,\lambda_n}^l$, the orbit is of the same dimension as the ambient manifold, $\dim \mathcal{O}(V_0)=\dim \CV_{\lambda_1,\dots,\lambda_n}^l-\dim G$.\\
Secondly, that the orbit $\mathcal{O}(V_0)$ is open is equivalent to $\mathcal{O}(V_0)$ being the whole space $\CV_{\lambda_1,\dots,\lambda_n}^l$. This is because for any $\hat w\in \CV_{\lambda_1,\dots,\lambda_n}^l$, which is represented by $\hat w= V_0 + \sum_i Q_i(\bx)\frac{\partial}{\partial x_i} \in \CV_{\lambda_1,\dots,\lambda_n}$ with each polynomial $Q_i(\bx)$ of rank no larger $l$, the dilation $\sigma_\epsilon:\bx\mapsto \epsilon \bx$ leaves $V_0$ invariant and reduces $\hat w-V_0$ non-linearly by at least a factor of $\epsilon$, as $Q_i(\bx)$ are at least quadric in $\bx$. For every open neighbourhood of $V_0$, there exists a small enough $\epsilon$ for $\hat w$, such that $\sigma_\epsilon^*(\hat w)$ is in that neighbourhood; and as $\mathcal{O}(V_0)$ is an open neighbourhood of $V_0$, there exist an $\epsilon$ and a diffeomorphism $\phi$ such that $V_0= \phi^*(\sigma_\epsilon^*(\hat w))=(\sigma_\epsilon\circ\phi)^*(\hat w)$. Consequently, $\hat w\in \mathcal{O}(V_0)$, and $\mathcal{O}(V_0)=\CV_{\lambda_1,\dots,\lambda_n}^l$.
\end{addmargin}
As $\Diff(\RR^n)_\bo^{2,l}$ is connected, that $G$ is trivial is equivalent to the Lie algebra $\mathfrak{g}$ of $G$ being trivial. This Lie algebra is characterized by
\[\mathfrak{g}=\{w\in \text{\bf diff}(\RR^n)^{2,l}_\bo:[V_0,w]=0\}.\]
Using the fact that the basis of the Lie algebra $\text{\bf diff}(\RR^n)_0^{2,l}$ is $x_1^{a_1}\dots x_n^{a_n}\frac{\partial}{\partial x_i}, 2\leq a_1+\dots +a_n<l$,
\[\left[\sum_j \lambda_jx_j\frac{\partial}{\partial x_j},x_1^{a_1}\dots x_n^{a_n}\frac{\partial}{\partial x_i}\right]=(a_1\lambda_1+\dots+a_n\lambda_n-\lambda_i)x_1^{a_1}\dots x_n^{a_n}\frac{\partial}{\partial x_i}.\]
As a result, $\mathfrak{g}$ is trivial if $a_1\lambda_1+\dots + a_n\lambda_n-\lambda_i\neq 0$ for all $i=1,\dots,n$ and $a_1,\dots,a_n\in \mathbb{N}$ with $2\leq a_1+\dots+a_n<l$. This gives the \emph{$\mathbb{N}$-linearity condition} for $\lambda_i$'s at the $l$-th step.\\
If we require the $\mathbb{N}$-linearity condition to hold for all $l>2$, which is not restrictive as such choices of Morse functions are still generic, then for every $l$ there exists a diffeomorphism $\psi_l$ such that $\psi_l^*(w_l)=V_0$. In other words, $|\psi_l^*(V)-V_0|(\bx)=O(|\bx|^l)$.
\newline
A closer look at the derivation of $\psi_l$ yields that
\[\psi_{l+1}=\psi_l+(\text{homogeneous term of order } (l+1)).\]
Hence $\psi=\lim_{l\rightarrow \infty}\psi_l$ is a formal power series with $\psi_l$ being its first $l$ terms, and formally $\psi^*(V)=V_0$. Consequently, there exists a formal sequence of diffeomorphisms that ``standardizes'' $V$ up to an error of order $\infty$.\\
In fact, we can choose a local diffeomorphism that has exactly this formal sequence, with the help of the \emph{Borel theorem} \cite[Theorem I.1.3]{Moerdijk13}:
\begin{addmargin}[1em]{.5em}
Let $C^\infty(\RR^{n+m})$ be the ring of smooth functions over $\RR^{n+m}$, and $\mathbf{m}^\infty_{\RR^n\times \{\bo\}}$ be the ideal of functions which are flat on $\RR^n \times \{\bo\}$ (flat means all partial derivatives vanish at $\RR^n \times \{\bo\}$). Let $C^\infty(\RR^n)[[y_1,\dots,y_m]]$ be the ring of formal power series of $y_1,\dots,y_n$ with smooth coefficients in $C^\infty(\RR^n)$. Then the Taylor series gives an isomorphism
\[C^\infty(\RR^{n+m})/\mathbf{m}^\infty_{\RR^n\times \{\bo\}}\xrightarrow{\sim} C^\infty(\RR^n)[[y_1,\dots,y_m]].\]
\end{addmargin}
Then for the constructed formal power series $\psi$, there exists a (non-unique) smooth function $\Psi:\RR^n\rightarrow\RR^n$, such that the Taylor series of $\Psi$ at $\bo$ is $\psi$, and $\Psi$ is a local diffeomorphism near $\bo$, as $D\Psi(\bo)=Id$ is invertible. Moreover, $\Psi^*(V)=V_0+O(|\bx|^{\infty})$, where $O(|\bx|^{\infty})$ is a locally flat function.
\end{proof}

\begin{corollary}\label{corl_N-linearity_is_key}
When $\mathbb{N}$-linearity condition fails to hold for the set of eigenvalues $\{\lambda_1,\dots,\lambda_n\}$, there exists a Morse function $f$ whose gradient cannot be standardized with the method of Proposition \ref{prop_lem_standardizing_on_WuWs}.
\end{corollary}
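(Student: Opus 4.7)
The plan is to reverse-engineer the argument of Proposition \ref{prop_lem_standardizing_on_WuWs}: I would identify the precise obstruction that the failure of $\mathbb{N}$-linearity creates, and then exhibit a Morse gradient field realising it. By hypothesis there exist $a = (a_1,\dots,a_n) \in \mathbb{N}^n$ with $l := a_1 + \dots + a_n \geq 2$ and an index $i$ such that $a_1\lambda_1 + \dots + a_n\lambda_n - \lambda_i = 0$. Write $X_{a,i} := x_1^{a_1}\cdots x_n^{a_n}\frac{\partial}{\partial x_i}$ for the associated resonant monomial vector field. The strategy is to produce a pair $(g, f)$ with $V := \nabla_g f = V_0 + c\, X_{a,i} + (\text{other corrections})$ and $c \neq 0$, and then to show that the coefficient $c$ is a conjugation invariant that obstructs reduction to $V_0$ via the procedure in Proposition \ref{prop_lem_standardizing_on_WuWs}.

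The obstruction comes directly from the bracket computation that drives Proposition \ref{prop_lem_standardizing_on_WuWs}. The adjoint action $\mathrm{ad}_{V_0} = [V_0, \cdot]$ is diagonal on the monomial basis of $\text{\bf diff}(\RR^n)_\bo^{l,l+1}$, with
\[\mathrm{ad}_{V_0}(X_{b,j}) = (b_1\lambda_1 + \dots + b_n\lambda_n - \lambda_j)\, X_{b,j}.\]
For our resonant pair $(a, i)$ this eigenvalue vanishes, so $X_{a,i}$ lies in $\ker(\mathrm{ad}_{V_0})$ and therefore outside $\mathrm{im}(\mathrm{ad}_{V_0})$ in degree $l$. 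However, the order-$l$ iterative step in the proof of Proposition \ref{prop_lem_standardizing_on_WuWs} modifies the degree-$l$ part of $V - V_0$ only by an element of $\mathrm{im}(\mathrm{ad}_{V_0})|_l$, since the tangent action of $\Diff(\RR^n)_\bo^{l, l+1}$ at $V_0$ on $\CV_{\lambda_1,\dots,\lambda_n}^{l+1}$ is precisely $\mathrm{ad}_{V_0}$. Consequently, for every diffeomorphism $\phi$ that the procedure can build up to order $l+1$, the coefficient of $X_{a,i}$ in $\phi^* V - V_0$ remains $c \neq 0$. The method therefore stalls at step $l$: no such $\phi$ satisfies $\phi^* V - V_0 = O(|\bx|^{l+1})$, so the Borel-realisation step at the end of Proposition \ref{prop_lem_standardizing_on_WuWs} has no formal series to promote.

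For the existence of a gradient field $V$ of this form I would appeal to Example \ref{example_non_N_linearity}, which handles the two-dimensional case $(\lambda_1, \lambda_2) = (2, 1)$ with resonance $(a, i) = ((0, 2), 1)$ via $V = 2(x_1 + x_2^2)\frac{\partial}{\partial x_1} + x_2\frac{\partial}{\partial x_2}$, an honest Morse gradient whose $X_{a,i}$-coefficient is nonzero. For a general failing resonance a parameter-count argument suffices: the assignment $(g, f) \mapsto \nabla_g f$, restricted to germs with fixed linearisation $V_0$, is a submersion at each Taylor order near the standard pair $(\delta_{ij}, \frac{1}{2}\sum \lambda_j x_j^2)$, so its image at order $l$ is open and in particular contains vector fields with nonzero $X_{a,i}$-coefficient.

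The main obstacle is this last step: ensuring that the gradient constraint, equivalently the closedness of the $1$-form $g(V, \cdot)$, is compatible with a nonzero resonant coefficient $c$. If for every admissible $(g, f)$ this constraint forced $c = 0$, the corollary would be vacuous. The explicit two-dimensional example rules this out in its setting, and I expect the parameter count above to extend to every dimension and every resonance $(a, i)$, since the freedom in $g$ (a symmetric matrix of smooth functions) together with the higher Taylor coefficients of $f$ vastly outstrips the codimension-one linear condition on $c$. Nevertheless, a clean proof will require verifying this transversality explicitly, ideally by producing a one-parameter family of Morse gradient fields whose $X_{a,i}$-coefficient varies non-trivially in $c$.
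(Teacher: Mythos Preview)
Your approach is sound and runs parallel to the paper's, but with a dual framing. The paper shows that the \emph{stabiliser} of $V_0$ in $\Diff(\RR^n)_\bo^{2,l}$ is non-trivial by exhibiting the explicit element $x_i \mapsto x_i + x_1^{a_1}\cdots x_n^{a_n}$, $x_j\mapsto x_j$ for $j\neq i$, and then invokes the orbit--stabiliser dimension count $\dim \mathcal{O}(V_0) = \dim \CV_{\lambda_1,\dots,\lambda_n}^l - \dim G$ to conclude that the orbit is a proper subset, hence some $w_l$ lies outside it. You argue instead that $X_{a,i}$ lies in the \emph{cokernel} of $\mathrm{ad}_{V_0}$ at degree $l$, so the degree-$l$ step of the procedure cannot remove it. These are two sides of the same linear-algebra coin; your version is more explicit about which vector fields fail, the paper's is cleaner about why \emph{some} vector field must fail.

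One imprecision to tighten: your claim that the $X_{a,i}$-coefficient $c$ is preserved ``for every diffeomorphism $\phi$ that the procedure can build up to order $l+1$'' is not literally true unless you control the earlier steps as well. A diffeomorphism with components of degree $m<l$ does alter degree-$l$ coefficients of $V$ through iterated brackets with $V_0$ and with the intermediate terms, so the particular number $c$ is not an invariant of the full $\Diff(\RR^n)_\bo^{2,l+1}$-action. The clean fix is to take $l$ to be the \emph{minimal} resonant degree and set $V=V_0+c\,X_{a,i}$ with no terms of degree $2,\dots,l-1$: then the earlier steps are vacuous and your step-$l$ analysis applies verbatim. The paper avoids this bookkeeping by working at the orbit level rather than tracking a specific coefficient.

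On the gradient constraint you are in fact more scrupulous than the paper, which simply writes ``by choosing $\nabla f$ such that $w_l\notin\mathcal{O}(V_0)$'' without verifying that such a $w_l$ is realisable as a Riemannian gradient. Your parameter-count sketch is the right idea, and for the model case $V=V_0+c\,X_{a,i}$ one can solve $g(V,\cdot)=df$ for a positive-definite $g$ near $\bo$ by perturbing the flat metric; for instance the vector field of Example~\ref{example_non_N_linearity} is the gradient of $f=x_1^2+\tfrac12 x_2^2+2x_1x_2^2$ with respect to $g=\mathrm{diag}(1,\,1+4x_1)$. So the obstacle you flag is real but surmountable.
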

\begin{proof}
Assume that there exists an $i$ and a set of non-negative integers $(a_1,\dots,a_n)$ with $a_1+\dots a_n\geq 2$ such that
\[a_1\lambda_1+\dots+a_n\lambda_n=\lambda_i.\]
Then we claim that the stabilizer $G$ of $V_0$ in the proof of Proposition \ref{prop_lem_standardizing_on_WuWs} is non-trivial.
\begin{addmargin}[1em]{.5em}
Let $\phi: \RR^n\rightarrow \RR^n$ be
\[x_i\mapsto x_i+x_1^{a_1}\dots x_n^{a_n},\text{ and } x_j\mapsto x_j, j\neq i.\]
Then $\phi$ is a local diffeomorphism near the origin. Moreover, by the chain rule, that $\phi$ is a stabilizer is equivalent to
\begin{align}
\nonumber \sum_j \lambda_j x_j \frac{\partial}{\partial x_j}&= \sum_j\left( \lambda_j x_j \left( \sum_l \frac{\partial \phi_l}{\partial x_j}\frac{\partial}{\partial \phi_l} \right)\right)\\
\nonumber &= \sum_l\left(\sum_{j}\left(\lambda_j x_j \frac{\partial}{\partial x_j}\right)\phi_l\right)\frac{\partial}{\partial \phi_l}\\
\nonumber &=\sum_l \lambda_l \phi_l \frac{\partial}{\partial \phi_l}.
\end{align}
So $\phi$ is a stabilizer of $V_0$, if and only if
\[\sum_{j}\left(\lambda_j x_j \frac{\partial}{\partial x_j}\right)\phi_l=\lambda_l\phi_l,\forall l=1,\dots,n.\]
It is easy to check that the $\phi$ we proposed earlier satisfies this relation, hence it is a non-trivial element of the stabilizer $G$ of $V_0$.
\end{addmargin}
As a result, the dimension argument $\dim \mathcal{O}(V_0)=\dim \CV_{\lambda_1,\dots,\lambda_n}^l-\dim G$ reveals that the orbit $\mathcal{O}(V_0)$ is a genuine subgroup of $\CV_{\lambda_1,\dots,\lambda_n}^l$, so long as $l\geq a_1+\dots+a_n$. By choosing $\nabla f$ such that $w_l=\nabla f/O(\bx^l)\in \CV_{\lambda_1,\dots,\lambda_n}^l -\mathcal{O}(V_0)$, we find a collection of Morse functions that cannot be standardized by a formal power series, as in Proposition \ref{prop_lem_standardizing_on_WuWs}, which concludes our proof.
\end{proof}
\begin{example}
\label{example_non_N_linearity}
Let 
\[V=2(x_1+x_2^2)\frac{\partial}{\partial x_1}+x_2\frac{\partial}{\partial x_1},\]
 and its standard form $V_0=2x_1\frac{\partial}{\partial x_1}+x_2\frac{\partial}{\partial x_1}$, then there exists no local diffeomorphism $\phi$ of $\RR^n$ fixing $\bo$ such that 
$\phi^*(V)=V_0.$
\end{example}
\begin{proof}
To prove that $V$ is not in the orbit of $V_0$ and consequently such a diffeomorphism $\phi$ does not exist, it is enough to prove that $V$ is not in the orbit generated by $\Diff (\RR^n)_\bo^{2,2}$ acting on $V_0$. Let $\psi\in \Diff (\RR^n)_\bo^{2,2}$, 
\[\psi(\bx)_1=y_1=x_1+ax_2+bx_1x_2+cx_1^2+dx_2^2,\]
\[\psi(\bx)_2=y_2=x_2+a' x_1+b'x_1x_2+c'x_1^2+d'x_2^2,\]
then 
\[V_0(\bx)=2x_1\frac{\partial}{\partial x_1}+x_2\frac{\partial}{\partial x_1}=2(y_1+y_2^2)\frac{\partial}{\partial y_1}+y_2\frac{\partial}{\partial y_1}=V(\by)\]
if and only if
\[\left(2x_1\frac{\partial}{\partial x_1}+x_2\frac{\partial}{\partial x_1}\right)y_1=2(y_1+y_2^2), \, \left(2x_1\frac{\partial}{\partial x_1}+x_2\frac{\partial}{\partial x_1}\right)y_2=y_2.\]
This set of equation has solution $y_2=x_2$ for $\psi(\bx)_2$. However, for $\psi(\bx)_1$, its coefficient $d$ has to satisfy $d=d+1$, which has no solution. As a result, $V$ is not in the orbit of $V_0$, so cannot be standardised.
\end{proof}
\subsubsection{$V=V_0$ on stable and unstable submanifolds.}\label{subsubsec3.1.2}
\noindent Now we show that it is possible to build coordinates out of the stable and unstable loci near a critical point, while maintaining the flatness of the difference between the vector field $V$ and its standard counterpart $V_0$. It is useful to recall that the stable and unstable loci of the critical point $p$ are in fact submanifolds, intersecting transversally at only $p$.
\begin{proposition}\label{lem_coordinate_out_of_WuWs}
Let $p$ be a critical point,  and $U$ a small open neighbourhood containing $p$. Let $W^u$ be the $k$-dimensional unstable submanifold of $p$ contained in $U$, and $W^s$ be the $(n-k)$-dimensional stable submanifold in $U$. Given that $\lambda_1,\dots,\lambda_n$ are $\mathbb{N}$-linearly independent, there exists a local coordinate chart $\Phi: \hat U \rightarrow U$, where $\hat U\subset \RR^n$ is an open neighbourhood of $\bo$, such that $\Phi(\bo)=p$, and
\[\Phi\left((\RR^k\times \{\bo_{n-k}\})\cap \hat U\right)=W^u,\]
\[\Phi\left((\{\bo_{k}\}\times\RR^{n-k} )\cap \hat U\right)=W^s.\]
In addition, $|V-V_0|(\bx)=O(|\bx|^\infty)$ for all $\bx\in \hat U$.
\end{proposition}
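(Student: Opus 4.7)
The plan is to start from the coordinate chart supplied by Proposition \ref{prop_lem_standardizing_on_WuWs}, on which $V=V_0+R$ with $|R|(\bx)=O(|\bx|^\infty)$, and then compose it with a further local diffeomorphism $\Psi$ that straightens $W^u$ and $W^s$ onto the coordinate subspaces without destroying the flatness estimate. On this chart the linearization of $V$ at $\bo$ is $\Lambda=\mathrm{diag}(\lambda_1,\dots,\lambda_n)$, so $T_\bo W^u=\RR^k\times\{\bo_{n-k}\}$ and $T_\bo W^s=\{\bo_k\}\times\RR^{n-k}$. Shrinking $U$ if necessary, $W^u$ and $W^s$ are smooth graphs
\[W^u=\{(\bx_k,h(\bx_k)):\bx_k\in U_k\},\qquad W^s=\{(g(\bx_{n-k}),\bx_{n-k}):\bx_{n-k}\in U_{n-k}\},\]
for smooth $h:\RR^k\to\RR^{n-k}$ and $g:\RR^{n-k}\to\RR^k$ with $h(\bo)=\bo$, $Dh(\bo)=0$, and similarly for $g$.

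The key step is to show that $h$ and $g$ are flat at the origin. I argue this for $h$; the case of $g$ is symmetric. Flow-invariance of $W^u$ under $V$ is equivalent to $V$ being tangent to the graph, which, after substituting $V=V_0+R$, gives for each $j>k$
\[\lambda_j h_j(\bx_k)-\sum_{i=1}^k \lambda_i x_i\,\partial_{x_i}h_j(\bx_k)=\sum_{i=1}^k\partial_{x_i}h_j(\bx_k)\,R^i(\bx_k,h(\bx_k))-R^j(\bx_k,h(\bx_k)).\]
Since $R$ is flat at $\bo$ and $|(\bx_k,h(\bx_k))|\leq 2|\bx_k|$ for $\bx_k$ small, the right hand side is $O(|\bx_k|^\infty)$. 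Expanding $h_j$ as a formal Taylor series $\sum_{m\geq 2}h_{j,m}$ and matching homogeneous degrees in the equation above modulo flat functions, every monomial $c_\alpha\bx_k^\alpha$ of $h_{j,m}$ must satisfy
\[c_\alpha\Bigl(\lambda_j-\sum_{i=1}^k\alpha_i\lambda_i\Bigr)=0.\]
Taking $a_i=\alpha_i$ for $i\leq k$, $a_i=0$ for $i>k$, and target index $j$ in Definition \ref{defn_N_linearity}, the bracketed factor is nonzero, so $c_\alpha=0$. Hence every Taylor coefficient of $h$ vanishes and $h=O(|\bx_k|^\infty)$.

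With $h$ and $g$ flat, define $\Psi(\by_k,\by_{n-k})=(\by_k+g(\by_{n-k}),\,\by_{n-k}+h(\by_k))$. Then $D\Psi(\bo)=\mathrm{Id}$, so $\Psi$ is a local diffeomorphism near $\bo$; and by construction $\Psi(\RR^k\times\{\bo\})$ and $\Psi(\{\bo\}\times\RR^{n-k})$ are the graph parameterizations of $W^u$ and $W^s$ respectively. To verify that flatness of $V-V_0$ is preserved under $\Psi$, note that flatness of $h,g$ implies $\Psi-\mathrm{Id}$, $D\Psi-\mathrm{Id}$, and hence $(D\Psi)^{-1}-\mathrm{Id}$ are all flat at $\bo$. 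Therefore $V_0(\Psi(\by))=\Lambda\by+\Lambda(\Psi(\by)-\by)=V_0(\by)+O(|\by|^\infty)$, while $R(\Psi(\by))=O(|\by|^\infty)$ since $|\Psi(\by)|\sim|\by|$. Combining, $\Psi^*(V)(\by)=V_0(\by)+O(|\by|^\infty)$, and the composition of the Proposition \ref{prop_lem_standardizing_on_WuWs} chart with $\Psi$ gives the desired coordinate.

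The main obstacle is the flatness proof for $h$ and $g$: the tangency equation is a nontrivial functional equation, and the cancellation of every Taylor coefficient hinges on the $\mathbb{N}$-linearity condition. If $\mathbb{N}$-linearity failed, resonant monomials $\bx_k^\alpha$ with $\lambda_j=\sum_i\alpha_i\lambda_i$ would allow nontrivial polynomial components of $h$, the unstable manifold would genuinely deviate from $\RR^k\times\{\bo\}$ to polynomial order, and no straightening map could preserve $|V-V_0|=O(|\bx|^\infty)$. Once flatness of $h,g$ is established, the construction of $\Psi$ and the propagation of the flatness estimate under pullback are essentially mechanical.
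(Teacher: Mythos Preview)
Your argument is correct and follows the same overall architecture as the paper: start from the Proposition~\ref{prop_lem_standardizing_on_WuWs} chart, write $W^u$ and $W^s$ as graphs $h,g$ over the coordinate subspaces, prove $h$ and $g$ are flat at the origin, and then straighten via the very same map $\Psi(\by_k,\by_{n-k})=(\by_k+g(\by_{n-k}),\,\by_{n-k}+h(\by_k))$ that the paper constructs (there written as $\psi=\psi^u+\psi^s$).

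The one genuine difference is the flatness proof for $h$ and $g$. The paper differentiates the implicit integral formula for the flow on $W^u$ and observes that all derivatives of the flat remainder $O_j$ vanish at $\bo$; your route via the tangency equation and Taylor-coefficient matching is cleaner and more transparent. One remark, though: your appeal to the $\mathbb{N}$-linearity condition at this step is stronger than needed. For $j>k$ and $\alpha$ supported on $\{1,\dots,k\}$, the eigenvalues $\lambda_j$ and $\lambda_1,\dots,\lambda_k$ have opposite signs, so $\lambda_j-\sum_{i\le k}\alpha_i\lambda_i\neq 0$ automatically, with no resonance possible. Thus the flatness of $h,g$ follows from the flatness of $R$ alone, and your closing comment that ``if $\mathbb{N}$-linearity failed\dots the unstable manifold would genuinely deviate to polynomial order'' is not quite right in this particular setting. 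The $\mathbb{N}$-linearity hypothesis is of course still needed, but only upstream, to obtain the Proposition~\ref{prop_lem_standardizing_on_WuWs} chart in the first place.
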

\begin{proof}
Utilizing Proposition \ref{prop_lem_standardizing_on_WuWs}, let $(\hat U_1,\phi)$ be the coordinate chart covering $p$, with $\hat U_1\subset \RR^n$ open, $\phi:\hat U_1\rightarrow U$ and $\phi(\bo)=p$, such that the estimate $|V-V_0|(\bx)=O(|\bx|^\infty)$ holds. Now, $\phi^{-1}(W^u)$ and $\phi^{-1}(W^s)$ are submanifolds of dimension $k$ and $n-k$ in $\hat U_1$, and they transversally intersect at only $\bo$. Hence there exist charts $(\hat U^k_1,\psi^u)$, $(\hat U^{n-k}_1,\psi^s)$, where $\hat U^k_1\subset \RR^k$ is an open neighbourhood of $\bo_k$ and $\hat U^{n-k}_1 \subset \RR^{n-k}$ is an open neighbourhood of $\bo_{n-k}$, s.t. $\psi^u(\bo_{k})=\bo$, $\psi^s(\bo_{n-k})=\bo$, and
\[\psi^u:\hat U^k_1\longrightarrow\phi^{-1}(W^u)\hookrightarrow\hat U_1, \psi^u(\bx_k)=(\bx_k,Y(\bx_k)),\]
\[\psi^s:\hat U^{n-k}_1\longrightarrow\phi^{-1}(W^s)\hookrightarrow\hat U_1, \psi^s(x_{n-k})=(Z(\bx_{n-k}),\bx_{n-k}),\]
where $Y:\hat U_1^k\rightarrow \RR^{n-k}$, and $Z:\hat U_1^{n-k}\rightarrow \RR^k$ are smooth functions, as they specify the embedding of corresponding submanifolds $\phi^{-1}(W^u)$ and $\phi^{-1}(W^s)$ in $\hat U_1$. Moreover, we claim that $Y$ and $Z$ are locally flat, namely $Y(\bx_k)=O(|\bx_k|^\infty)$, and $Z(\bx_{n-k})=O(|\bx_{n-k}|^\infty)$.
\begin{addmargin}[1em]{.5em}
Let us justify this claim for the unstable submanifold on the coordinate chart. Denote
\[V(\bx)=\nabla f(\bx)=\sum_{i=1}^n (\lambda_i x_i+O_i(\bx))\frac{\partial}{\partial x_i}, \]
where $O_i(\bx)=O(|\bx|^\infty)$ for all $i=1,\dots,n$.
Then flow lines on the unstable submanifold parametrized by $\psi^u$, consists exactly of those $\bu(t)$ that are the solutions to
\[\left\{ {\begin{array}{ll}
\dint \bu(t)=V(\bu(t))_i=\lambda_iu_i(t)+O_i(\bu(t))\\
\bu(0)=(u_1(0),\dots,u_n(0))\\
\end{array} }, \right. \]
and the implicit solution $\bu(t)=(x_1(t),\dots,x_k(t), Y_1(\bx_k(t)),\dots, Y_{n-k}(\bx_k(t)))$ is
\begin{align*}
&x_i(t)=u_i(0)e^{\lambda_i t}+e^{\lambda_i t}\int_0^t e^{-\lambda_i s}O_i(\bx_k(s),Y(\bx_k(s)))ds, i=1,\dots,k,\\
&Y_j(\bx_k(t))=u_j(0)e^{\lambda_j t}+e^{\lambda_j t}\int_0^t e^{-\lambda_j s}O_j(\bx_k(s),Y(\bx_k(s)))ds, j=1,\dots,n-k.
\end{align*}
For any multi-index $\ba=(a_1,\dots,a_k)\in \mathbb{N}^k$, the $\ba$-th derivative of $Y_j(\bx_k)$ at $\bx_k=\bo_k$ is
\begin{align*}
\left(\frac{\partial}{\partial \bx_k}\right)^\ba \left. Y_j(\bx_k)\right|_{\bx=\bo_k}&=\left(\frac{\partial}{\partial \bx_k(t)}\right)^\ba \left. Y_j\left(\bx_k(t)\right)\right|_{\bx(t)=\bo_k}\\
&= \left(\frac{\partial}{\partial \bx_k(t)}\right)^\ba \left. O_j(\bx_k(t), Y(\bx_k(t)))\right|_{\bx(t)=\bo_k}=0,
\end{align*}
 because $\psi^u(\bo_k)=\bo$ requires $(\bx_k(t), Y(\bx_k(t))|_{\bx(t)=\bo_k}=\bo$, and all derivatives of $O_i$ vanishes at $\bo$. Hence, $Y(\bx_k)$ attributes its local flatness to that of $O_i(\bx)$'s.\\
The same conclusion can be drawn following a similar argument for the stable submanifold and diffeomorphism $\psi^s$.
\end{addmargin}
Now let us construct an $\RR^n$ diffeomorphism,
\begin{align}
\nonumber \psi: \hat U &\longrightarrow \hat U_2\\
\nonumber \bx &\mapsto \psi^u(\bx_k)+ \psi^s(\bx_{n-k}),
\end{align}
where $\hat U=\left(\hat U_1^k\times\hat U_1^{n-k}\right)\cap \psi^{-1}(\hat U_2)$ for some $\hat U_2\subset \hat U_1$ an open neighbourhood of $\bo$. For simplicity, we still denote $\left(\hat U_1^k\times\{\bo_{n-k}\}\right)\cap \hat U$ by $\hat U_1^k\times \{\bo_{n-k}\}$, and similarly for $\{\bo_{k}\}\times\hat U_1^{n-k }$, then
\[\psi(\hat U_1^k,\bo_{n-k})=\phi^{-1}(W^u),\]
\[\psi(\bo_k,\hat U_1^{n-k})=\phi^{-1}(W^s).\]
Then there is a local chart near the critical point, $\Phi=\phi\circ\psi:\hat U\rightarrow X$, $\Phi(\bo)=p$, and
\[\Phi(\hat U_1^k,\bo_{n-k})=W^u,\]
\[\Phi(\bo_k,\hat U_1^{n-k})=W^s.\]
Hence we find a local chart $\Phi$ where we can always split the stable and unstable manifolds by coordinates, while maintaining the estimate of $(V-V_0)$ being flat.
\end{proof}

\noindent
Let us denote that $V=\nabla f=\lambda_ix_i\partial_i+O_i(\bx)\partial_i$, and $V_0=\lambda_ix_i\partial_i$, on a open bounded neighbourhood $U$ of $p$. Then we write a flow induced by $V$ as $G_t(\bx)$, and the corresponding standard flow by $F_t(\bx)$, namely,
\begin{align}
\nonumber &G_t(\bx)=\text{ solution to }
\left\{ {\begin{array}{ll}
\dint \bu(t)=V(\bu(t))_i=\lambda_iu_i(t)+O_i(\bu(t))\\
\bu(0)=\bx\\
\end{array} }, \right. \\
\nonumber &F_t(\bx)=\text{ solution to }
\left\{ {\begin{array}{ll}
\dint \bu(t)=V_0(\bu(t))_i=\lambda_iu_i(t)\\
\bu(0)=\bx\\
\end{array} }. \right.
\end{align}
Then solutions can be written down explicitly, or at least formally for $G$, as families of local diffeomorphisms near $p$ parametrized by $t$,
\begin{align}
\nonumber &F_t(\bx)_i=x_i e^{\lambda_i t},\\
\label{eqn_implicit_soln_of_G_t(x)} &G_t(\bx)_i= x_ie^{\lambda_i t}+e^{\lambda_i t}\int^t_0 e^{-\lambda_i s}O_i(G_s(\bx))\, ds,
\end{align}
where $i=1,\dots,n$.\\
With the help of $F_t(\bx)$ and $G_t(\bx)$, now we construct local coordinates where $V=V_0$ on the stable and unstable submanifolds.

\begin{proposition}\label{lem_v=v_0atWuWs}
Assume that Morse eigenvalues $\{\lambda_1,\dots,\lambda_n\}$ satisfy the $\mathbb{N}$-linearity condition, and $\lambda_1,\dots, \lambda_k>0$, $\lambda_{k+1},\dots,\lambda_n<0$. Let $W^u$ and $W^s$ be open neighbourhoods of the critical point $p$ on the stable and the unstable submanifolds of $p$, and let formal conjugate functions on $W^u$ and $W^s$ be
\begin{align}
\nonumber
 \Psi_u=\lim_{T\rightarrow \infty} F_{T}G_{-T} \text{ on } W^u,\\
\nonumber
 \Psi_s=\lim_{T\rightarrow \infty} F_{-T}G_{T} \text{ on } W^s.
\end{align}
Then $\Psi_u,\Psi_s$ are smooth local diffeomorphisms on $W^u$ and $W^s$ respectively. They fix the critical point, and
\[\Psi_u^*(V|_{W^u})=V_0|_{W^u}, \Psi_s^*(V|_{W^s})=V_0|_{W^s}.\]
In addition, there exist local coordinates on a small neighbourhood $U$ of $p$, induced by $\Psi_u$ and $ \Psi_s$, such that
\[|V-V_0|(\bx)=O(|\bx|^\infty),\]
\[V(\bx_k,\bo_{n-k})=V_0(\bx_k,\bo_{n-k}),\, V(\bo_k,\bx_{n-k})=V_0(\bo_k,\bx_{n-k}),\]
 for all $\bx, (\bx_k,\bo_{n-k}),(\bo_k,\bx_{n-k})\in U$
\end{proposition}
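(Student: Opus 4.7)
The plan is to exploit the explicit integral form \eqref{eqn_implicit_soln_of_G_t(x)} of the flow $G_t$ to analyse $\Psi_u$ and $\Psi_s$ directly, starting from coordinates furnished by Proposition \ref{lem_coordinate_out_of_WuWs}, in which $W^u$ and $W^s$ are the coordinate subspaces $\{\bx_{n-k}=\bo\}$ and $\{\bx_k=\bo\}$, and $O_i(\bx)=O(|\bx|^\infty)$ for every $i$. Substituting \eqref{eqn_implicit_soln_of_G_t(x)} into $F_T G_{-T}$ and changing variables $s\mapsto -\tau$, one obtains for $\bx\in W^u$ and $i\le k$
\[F_T G_{-T}(\bx)_i=x_i-\int_0^T e^{\lambda_i\tau}\,O_i(G_{-\tau}\bx)\,d\tau,\]
where the integrand lives on $W^u$ because $W^u$ is $G$-invariant. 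An analogous formula applies to $F_{-T}G_T$ on $W^s$ with the sign of time reversed.

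First I would establish that $\Psi_u$ exists as a smooth local diffeomorphism fixing $\bo$. On $W^u$ a Gronwall-type comparison with the linear flow yields $|G_{-\tau}\bx|\le C|\bx|e^{-\lambda_{\min}^u\tau}$ near $\bo$, where $\lambda_{\min}^u=\min_{i\le k}\lambda_i>0$. Combined with the flatness $|O_i(\by)|\le C_N|\by|^N$ for every $N$, the integrand is bounded by $C_N|\bx|^Ne^{(\lambda_i-N\lambda_{\min}^u)\tau}$, which is integrable on $[0,\infty)$ for $N>\lambda_i/\lambda_{\min}^u$. This simultaneously yields existence of the limit and the flatness $\Psi_u(\bx)-\bx=O(|\bx|^\infty)$. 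Identical estimates on $\partial^\alpha_{\bx}\bigl[e^{\lambda_i\tau}O_i(G_{-\tau}\bx)\bigr]$, after bounding $\partial^\alpha G_{-\tau}$ through the variational equations, permit differentiation under the integral sign to any order, giving $\Psi_u\in C^\infty$ with $D\Psi_u(\bo)=I$. The conjugation property is a formal consequence of the flow group laws,
\[\Psi_u\circ G_s=\lim_T F_T G_{-T+s}=\lim_{T'\to\infty}F_{T'+s}G_{-T'}=F_s\circ\Psi_u,\]
whose derivative at $s=0$ is $D\Psi_u\cdot V|_{W^u}=V_0|_{W^u}\circ\Psi_u$, i.e.\ $\Psi_u^*(V|_{W^u})=V_0|_{W^u}$. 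The symmetric construction produces $\Psi_s$ with $\Psi_s^*(V|_{W^s})=V_0|_{W^s}$.

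To assemble a chart on a full neighbourhood of $p$ I would glue these by
\[\tilde\Psi(\bx_k,\bx_{n-k})=\bigl(\Psi_u(\bx_k),\Psi_s(\bx_{n-k})\bigr),\]
each factor acting on its own block of variables. Since $\Psi_u-\mathrm{id}$ and $\Psi_s-\mathrm{id}$ are $C^\infty$-flat at the origins of their subspaces, $\tilde\Psi-\mathrm{id}$ is $C^\infty$-flat at $\bo\in\RR^n$, so $\tilde\Psi$ is a local diffeomorphism. On $\{\bx_{n-k}=\bo\}$ the restriction of $\tilde\Psi$ coincides with $\Psi_u$, and since $V$ is tangent to $W^u$ it has no transverse components, so the conjugation gives $\tilde\Psi^*V=V_0$ identically on that axis; symmetrically on $\{\bx_k=\bo\}$. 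Off the axes, the decomposition
\[\tilde\Psi^*V-V_0=\bigl((D\tilde\Psi)^{-1}-I\bigr)V(\tilde\Psi\bx)+(V-V_0)(\tilde\Psi\bx)+\bigl(V_0(\tilde\Psi\bx)-V_0(\bx)\bigr)\]
exhibits each summand as a product of a smooth factor with a $C^\infty$-flat factor, so $|\tilde\Psi^*V-V_0|(\bx)=O(|\bx|^\infty)$ throughout the neighbourhood.

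The principal technical obstacle is propagating $C^\infty$ control uniformly in $T$: convergence in $C^0$ is immediate from the flat estimates, but justifying differentiation under the integral sign to all orders demands simultaneous control of every derivative $\partial^\alpha_{\bx}G_{-\tau}$ as $\tau\to\infty$, which rests on careful use of the variational equations combined with the exponential contraction on $W^u$. The second delicate point is the gluing step: $\tilde\Psi$ preserves flatness off the axes only because $V-V_0$ is flat in the \emph{ambient} variable $|\bx|$ and not merely along the axes -- exactly the conclusion of Proposition \ref{lem_coordinate_out_of_WuWs}, which is therefore the essential input to this final assembly.
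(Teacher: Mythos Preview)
Your proposal is correct and follows essentially the same route as the paper: start from the coordinates of Proposition~\ref{lem_coordinate_out_of_WuWs}, express $F_TG_{-T}$ via the integral formula~\eqref{eqn_implicit_soln_of_G_t(x)}, use exponential contraction of $G_{-\tau}$ on $W^u$ together with flatness of the $O_i$ to obtain convergence, bootstrap through the variational equations for higher derivatives, and finally glue $\Psi_u\times\Psi_s$ and check that flatness survives. The only cosmetic differences are that the paper obtains its $C^0$ decay via the weighted quantity $\sum_i|G_t(\bx)_i|^{1/\lambda_i}$ rather than a direct Gronwall bound on $|G_{-\tau}\bx|$, and that you derive the conjugation identity $\Psi_u\circ G_s=F_s\circ\Psi_u$ explicitly from the flow group law whereas the paper leaves this implicit; neither affects the substance.
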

\noindent 
\begin{proof}
As established in Proposition \ref{lem_coordinate_out_of_WuWs}, there exist local coordinates $(x_1,\dots,x_n)$ where the critical point is mapped to $\bo$, with $W^u=\{(\bx_k,\bo)\}$, $W^s=\{(\bo,\bx_{n-k})\}$ and $|V-V_0|(\bx)=O(|\bx|^\infty)$. Denote this local coordinate chart explicitly by $\Phi:\hat U\rightarrow X$, namely, coordinates $(x_1,\dots,x_n)\in\hat U$, with $\hat U$ an open neighbourhood covering $\bo$ in $\RR^n$.\\
From now on, we restrict ourselves on $W^u$ within this local coordinate chart, and prove that $\Psi_u$ is a diffeomorphism. With this choice of coordinates,
\[G_t(\bx_k,\bo_{n-k})=(G_t(\bx_k,\bo_{n-k})_1,\dots,G_t(\bx_k,\bo_{n-k})_k,0,\dots,0),\]
and we write $G_t(\bx_k)=(G_t(\bx_k,\bo_{n-k})_1,\dots,G_t(\bx_k,\bo_{n-k})_k)$ in this proof.\\
The formal conjugate function $\Psi_u$ is
\[\Psi_u(\bx_k)=\lim_{T\rightarrow \infty} F_{T}G_{-T}(\bx_k)=\lim_{T\rightarrow \infty}\left(x_i+\int^{-T}_0 e^{-\lambda_i s}O_i(G_s(\bx_k))\, ds\right)_{i=1,\dots,k}.\]
By dominated convergence theorem, the following equation
\begin{equation}\label{eqn_l-driv_as_integration}
\nabla^l (\Psi_u(\bx_k)_i-x_i)=\lim_{T\rightarrow \infty}\int^{-T}_0 e^{-\lambda_i s}\nabla^l O_i(G_s(\bx_k))\, ds,
\end{equation}
 holds for all non-negative multi-indices $l$, so long as the right hand side limit exists for all $l$.\\
Taking into account that from the defining equation of $G_t(\bx)$,
\[\nabla^l \dint G_t(\bx_k)_i=\lambda_i \nabla^l G_t(\bx_k)_i+\nabla^l O_i(G_t(\bx_k)),\]
we have that for any $t\in \RR$ and multi-index $l$,
\begin{equation} \label{eqn_l-integrand_as_driv}
\dint (e^{-\lambda_i t}\nabla^l G_t(\bx_k))=e^{-\lambda_i t}\nabla^l O_i(G_t(\bx_k)).
\end{equation}
Take equation \eqref{eqn_l-integrand_as_driv} into \eqref{eqn_l-driv_as_integration}, so the $l$-th partial derivative of $\Psi_u(\bx_k)_i$ satisfies the relation
\begin{equation}\label{eqn_lim_of_psi_u}
\nabla^l (\Psi_u(\bx_k)_i)=\lim_{t\rightarrow\infty} e^{\lambda_i t} \nabla^l G_{-t}(\bx)_i,
\end{equation}
as long as the the limit on the right hand side of the equation exits.\\
To prove that $\Psi_u$ is a local diffeomorphism, it is enough to show that $\Psi_u$ is smooth and $D\Psi_u$ is invertible near $\bo_k$. Apparently, $D\Psi(\bx_k)|_{\bx_k=\bo_k}=Id_k$ is invertible. For smoothness, it is sufficient to prove that the right hand side of equation \eqref{eqn_lim_of_psi_u} is continuous for each $l$, and we will achieve that with decay estimates of $\bx_k$-derivatives of $G_t(\bx_k)$ when $t\rightarrow -\infty$.\\

\noindent
{\bf $C^0$ regularity of $\Psi_u$.}\\
To prove the existence of $\Psi_u$, we need decay estimate of $G_t(\bx)$ on $W^u$. Note that for every large enough integer $K$, there exists a positive constant $C_K$ such that
\[| O_i (\bx)|\leq C_K |\bx^{1/\lambda}|^K, \forall i=1, \dots,n,\]
 as $O_i(\bx)$ is locally flat for each $i$. Consider
 \begin{align}
 \nonumber \dint &(|G_t(\bx)_1|^{1/\lambda_1}+\dots+|G_t(\bx)_k|^{1/\lambda_k})\\
 \nonumber &=  |G_t(\bx)_1|^{1/\lambda_1}+\dots+|G_t(\bx)_k|^{1/\lambda_k}+ \sum_{i}\frac{sgn(G_t(\bx)_i)}{\lambda_i}G_t(\bx)_i^{1/\lambda_i-1} O_i(G_t(\bx))\\
 \nonumber &\geq |G_t(\bx)_1|^{1/\lambda_1}+\dots+|G_t(\bx)_k|^{1/\lambda_k} - C_1 |G_t(\bx)^{1/\lambda}|^{K-1+\max\limits_{1,\dots,k}\lambda_i}.
  \end{align}
Denote $u(t)=|G_t(\bx)_1|^{1/\lambda_1}+\dots+|G_t(\bx)_k|^{1/\lambda_k}=|G_t(\bx)^{1/\lambda}|$, $K_2:=K-1+\max\limits_{1,\dots,k}\lambda_i$, then the above inequality reduces to
\[\dint u(t)\geq u(t)-C_1 u(t)^{K_2},\]
which has explicit solution
\[\frac{u(t)^{K_2}}{u(t)-C_1u(t)^{K_2}}\leq C_2 e^{(K_2-1)t}\]
for some positive constant $C_2$. In addition, as we only care about $u(t)$ for $t\rightarrow -\infty$, the above inequality simplifies to
\[u(t)\leq \frac{C_2^{\frac{1}{K_2-1}}e^t}{(1+C_1C_2e^{(K_2-1)t})^{\frac{1}{K_2-1}}}.\]
An upshoot of this decaestimate is the observation
\[|\dint (e^{-\lambda_i t}G_t(\bx)_i)|=|e^{-\lambda_i t}O_i(G_t(\bx))|\leq C_3 e^{K\min \lambda_i t}.\]
And because the derivative is absolutely dominated by an integrable function, we know that indeed its intergation on the whole of $t\in(-\infty,0]$ exists, and consiquently so does $\Psi_u$, as
\[\Psi_u(\bx)_i=\int^{-\infty}_0 \dint (e^{-\lambda_i t}G_t(\bx)_i) dt.\]
Furthermore, the existence of this limit improves our initial decay estimate to
\[|G_t(\bx)_i|\leq Ce^{\lambda_i t}.\]

\noindent
{\bf $C^m$ regularity of $\Psi_u$.}\\
This will be done using bootstrapping method. Let the $(m-1)$-th induction hypothesis be that there exist a constant $C$ such that the $C^{m-1}$ norm of $G_t(\bx)_i$ has exponential decay, namely,
\[\|G_t(\bx)_i\|_{m-1}\leq Ce^{\lambda_i t}.\]
Then it will be sufficient to show that the corresponding $m$-th decay estimate holds.\\
We will utilize the following ODE
\[\dint \nabla^m G_t(\bx)_i=\lambda_i \nabla^m G_t(\bx)_i+\nabla^m (O_i(G_t(\bx)));\]
and with higher order chain rule
\[\nabla^m (O_i(G_t(\bx)))=\sum_{j=1,\dots,k}\frac{\partial O_i(G_t(\bx))}{\partial y_j}\nabla^mG_t(\bx)_j+R_m(O_i(G_t(\bx)),G_t(\bx)),\]
this ODE can be rewritten as
\begin{align*}
\dint \nabla^m G_t(\bx)_i=&\lambda_i \nabla^m G_t(\bx)_i\\
&+\sum_{j=1,\dots,k}\frac{\partial O_i(G_t(\bx))}{\partial y_j}\nabla^mG_t(\bx)_j+R_m(O_i(G_t(\bx)),G_t(\bx)),
\end{align*}
note that when $|m|=1$, the term $R_m(O_i(G_t(\bx)),G_t(\bx))$ vanishes, which otherwise contains derivatives of $G_t(\bx)$ of order lower than $m$. \\
Summing up $i=1,\dots, k$ gives
\begin{align*}
\dint |\nabla^m G_t(\bx)|\geq & m_\lambda |\nabla^m G_t(\bx)|\\
&- k|\nabla O(G_t(\bx))||\nabla^mG_t(\bx)|-\sum_i|R_m(O_i(G_t(\bx)),G_t(\bx))|,
\end{align*}
where $m_\lambda=\min_{i=1,\dots,k} \lambda_i$, and $|\nabla O(\bx)|=\sum_i|\nabla O_i(\bx)|$. For every large enough integer $K$, there exists a constant $C$ such that $|\nabla O(\bx)|\leq C|\bx|^K$, so there exists some $T\in (-\infty,0]$ such that whenever $t<T$,
\[m_\lambda-k|\nabla O(G_t(\bx))|\geq \frac{m_\lambda}{2}.\]
As a result,
\[\dint \left(e^{-\frac{m_\lambda}{2}t}|\nabla^m G_t(\bx)|\right)\geq -e^{-\frac{m_\lambda}{2}t}\sum_i|R_m(O_i(G_t(\bx)),G_t(\bx))|,\]
and this amounts to
\begin{align}\label{eqn_first_estimate_C^k}
\nonumber \lim_{t\rightarrow -\infty}&e^{-\frac{m_\lambda}{2}t}|\nabla^m G_t(\bx)|\\
 &\leq \int_{-\infty}^T e^{-\frac{m_\lambda}{2}t}\sum_i|R_m(O_i(G_t(\bx)),G_t(\bx))| dt+e^{-\frac{m_\lambda}{2}T}|\nabla^m G_T(\bx)|.
\end{align}
Note that using higher order chain rule,
\begin{align}
\nonumber |R_m(O_i(G_t(\bx)),G_t(\bx))|&\leq C\sum_{\substack{k_1+2k_2+\dots+(m-1) k_{m-1}=m,\\ k:=k_1+\dots+k_{m-1}}} |\nabla^k O_i(G_t(\bx))|\left\|G_t(\bx)\right\|_{m-1}^k\\
\nonumber &\leq C \|O(G_t(\bx))\|_m\, \max\{1, \|G_t(\bx)\|_{m-1}^m\},
\end{align}
so the integrand in \eqref{eqn_first_estimate_C^k} has exponential decay, following from that of $ \|O(G_t(\bx))\|_m$. Hence the integration converges, and there exists some constant $C$ such that
\[|\nabla^m G_t(\bx)|\leq Ce^{\frac{m_\lambda}{2}t}\]
for all $t<< T$.\\
Based on this estimate we can proceed with the following observation
\begin{align}
\nonumber &\left|\dint \left(e^{-\lambda_i t} \nabla^mG_t(\bx)_i\right)\right|\\
\nonumber &=\left|e^{-\lambda_i t}\left(\sum_{j=1,\dots,k}\frac{\partial O_i(G_t(\bx))}{\partial y_j}\nabla^mG_t(\bx)_j+R_m(O_i(G_t(\bx)),G_t(\bx))\right)\right|\\
\nonumber &\leq C_1 e^{C_2 t}
\end{align}
for some positive constants $C_1, C_2$. Because the derivative $\dint \left|e^{-\lambda_i t}\nabla^m G_t(\bx)_i\right|$ is absolutely dominated by an integrable function, itself is integrable over $t\in (-\infty,0]$. Hence $\nabla^m (\Psi_u)_i=\lim_{t\rightarrow -\infty}e^{-\lambda_i t}\nabla^m G_t(\bx)_i $ exists, and there exists a constant $C$ such that
\[\left|\nabla^m G_t(\bx)_i\right|\leq C e^{\lambda_i t}, \]
which is exactly what we want for the $m$-th induction hypothesis.\\
{\bf The local diffeomorphism generated by $\Psi_u, \Psi_s$.\\}
Given the construction as before, it is clear that $\Psi_u$ and $\Psi_s$ are $C^\infty$ diffeomorphisms on an open neighbourhood covering the origin of $\RR^k$ and $\RR^{n-k}$ respectively,and both of them fix the origin. Then there exist open neighbourhoods $\bo_k\in\hat U_u\subset \RR^k$ and $\bo_{n-k}\in \hat U_s\subset \RR^{n-k}$ such that $\hat U_k \times \hat U_s\subset \hat U$, and
\begin{align}
\nonumber \Psi=\Phi\circ(\Psi_u\times\Psi_s):&\hat U_k\times \hat U_s\longrightarrow X\\
\nonumber &(\bx_k,\bx_{n-k})\mapsto \Phi(\Psi_u(\bx_k),\Psi_s(\bx_{n-k})).
\end{align}
The diffeomorphism $\Psi$ is a local coordinate chart covering $p$, with $\Psi(\bo)=p$.\\
Apparently, over this coordinate chart, $\Psi(\hat U_u\times\{\bo_{n-k}\})=W^u, \Psi(\{\bo_k\}\times\hat U_s)=W^s$, and $V|_{W^u}=V_0|_{W^u}$, $V|_{W^s}=V_0|_{W^s}$. Moreover, as we start with the local coordinate $(\hat U, \Phi)$ where $|V-V_0|(\bx)=O(|\bx|^\infty)$, it is obvious that this local flatness of $(V-V_0)$ is preserved by the diffeomorphism $\Psi$ --- as the vanishing of all derivatives at the origin is passed along by chain rule.
\end{proof}
\subsubsection{Estimating $(V-V_0)$ with controlled error terms.}
\label{subsubsec3.1.3}
\begin{thm}\label{thm_estimate_V-V_0_WuWs}
Assume that the Morse eigenvalues $\lambda_1,\dots,\lambda_n$ are $\mathbb{N}$-linearly independent. Then on a small neighbourhood $\hat U$ containing $p$, for every large enough positive integer $\alpha$, there exist local coordinates $(x_1,\dots,x_n)$ with $p$ mapped to $\bo$, and a positive constant $C=C(\alpha)$ such that
\[|V-V_0|(\bx)\leq C|\bx_{k}|^{\alpha}|\bx_{n-k}|^{\alpha}\]
for every $\bx\in \hat U$.
\end{thm}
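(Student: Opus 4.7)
My plan is to build on Proposition~\ref{lem_v=v_0atWuWs} and refine the local coordinates once more so that, for any prescribed $\alpha$, the difference $h:=V-V_0$ acquires vanishing of order $\alpha$ in the transverse direction along each of $W^u$ and $W^s$. From such coordinates the factored estimate then drops out of a plain Taylor expansion.

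To obtain higher-order vanishing along $W^u=\{\bx_{n-k}=\bo\}$, I imitate the iterative scheme of Proposition~\ref{prop_lem_standardizing_on_WuWs}, but applied to transverse jets along $W^u$ rather than jets at the origin. At the $\ell$-th step ($\ell=2,\dots,\alpha-1$), suppose the current $h$ already vanishes to order $\ell$ on $W^u$ and let $R_\ell$ denote the degree-$\ell$ homogeneous part of its $\bx_{n-k}$-Taylor expansion along $W^u$. I seek a coordinate change $\phi=\mathrm{id}+P$, with $P=\sum_{i,\,|\tau|=\ell}\bx_{n-k}^\tau Q_\tau^{\,i}(\bx_k)\,\partial_{x_i}$, and solve the cohomological equation $[V_0,P]=-R_\ell$. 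A direct computation shows that $[V_0,\,\cdot\,]$ acts diagonally on monomial vector fields $\bx^\gamma\partial_{x_i}$ by multiplication with $\gamma\cdot\lambda-\lambda_i=\sum_l\gamma_l\lambda_l-\lambda_i$; the $\mathbb{N}$-linearity condition makes this factor nonzero whenever $|\gamma|\geq 2$, so the equation is solvable monomial by monomial, giving a formal power series solution in $\bx_k$. The Borel theorem, invoked exactly as in the proof of Proposition~\ref{prop_lem_standardizing_on_WuWs}, realises $P$ as a genuine smooth correction whose Taylor series is the formal solution. Iterating over $\ell=2,\dots,\alpha-1$ and then running the symmetric procedure along $W^s$ produces a diffeomorphism after which $h$ vanishes to order $\alpha$ on both $W^u$ and $W^s$, remains flat at $\bo$, and preserves the identifications $W^u=\{\bx_{n-k}=\bo\}$ and $W^s=\{\bx_k=\bo\}$.

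In these coordinates, Taylor's theorem with integral remainder in $\bx_{n-k}$ about $\bo_{n-k}$ kills the first $\alpha-1$ terms and yields $|h(\bx)|\leq C|\bx_{n-k}|^\alpha$ uniformly on a compact neighbourhood of $\bo$; the symmetric argument gives $|h(\bx)|\leq C'|\bx_k|^\alpha$. Multiplying these two pointwise estimates produces $|h(\bx)|^2\leq CC'|\bx_k|^\alpha|\bx_{n-k}|^\alpha$, hence $|h(\bx)|\leq\sqrt{CC'}\,|\bx_k|^{\alpha/2}|\bx_{n-k}|^{\alpha/2}$; running the whole construction with $2\alpha$ in place of $\alpha$ then delivers the bound $|V-V_0|\leq C_\alpha|\bx_k|^\alpha|\bx_{n-k}|^\alpha$ claimed by the theorem.

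The main obstacle I anticipate is compatibility between the $W^u$- and $W^s$-straightening procedures: a correction that annihilates the $\ell$-th transverse jet of $h$ along $W^u$ generically disturbs the vanishing already arranged along $W^s$, and vice versa. I plan to handle this by alternating between $W^u$- and $W^s$-corrections and exploiting the fact that each new correction is of strictly higher transverse order than the disturbance it must absorb; the $\mathbb{N}$-linearity condition continues to guarantee solvability of the cohomological equations at every stage, so the alternating scheme converges in every finite jet and, through a final application of the Borel theorem, is realised by a single smooth diffeomorphism.
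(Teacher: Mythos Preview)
Your proposal has a genuine gap at the heart of the argument: the ``solve the cohomological equation monomial by monomial and realise via Borel'' step cannot work as written. After Proposition~\ref{lem_v=v_0atWuWs} the difference $h=V-V_0$ is \emph{flat} at the origin, so the coefficient functions $g_\tau^i(\bx_k)=\frac{1}{\tau!}\partial_{\bx_{n-k}}^\tau h_i(\bx_k,0)$ appearing in your transverse jet $R_\ell$ are themselves flat at $\bx_k=\bo_k$. Their formal Taylor series in $\bx_k$ is identically zero, hence the monomial-by-monomial solution of $[V_0,P]=-R_\ell$ is the zero power series, and Borel's theorem then produces the zero correction (or at best some uncontrolled flat function). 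No formal-power-series computation at the origin can see, let alone kill, a flat but nonzero $R_\ell$; the $\mathbb{N}$-linearity condition is simply irrelevant here because there is nothing nonzero to divide.

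The paper's proof solves the same cohomological equation by a completely different mechanism. Writing it as a first-order linear ODE along the flow of $V_0$ restricted to $W^s$ (respectively $W^u$), namely
\[
\CL_{V_0}A_{\ba,i}+\Bigl(\sum_{j\le k}a_j\lambda_j-\lambda_i\Bigr)A_{\ba,i}=-C_{\ba,i},
\]
one integrates explicitly to get
\[
A_{\ba,i}(\bx_{n-k})=\int_0^\infty e^{(\sum_j a_j\lambda_j-\lambda_i)s}\,C_{\ba,i}\bigl(\{x_je^{\lambda_j s}\}_{j>k}\bigr)\,ds,
\]
and it is precisely the flatness of $C_{\ba,i}$ at the origin that makes this integral converge despite the exponential prefactor: along the flow the argument tends to $\bo$ and the flat source decays faster than any exponential. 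So flatness, which wrecks your formal approach, is exactly what makes the analytic approach succeed. Note also that the paper's ansatz mixes the two directions in a single step (correcting $x_i$ for $i\le k$ by polynomials in $\bx_k$ with coefficients in $\bx_{n-k}$, and vice versa for $i>k$), which already builds in the compatibility you were worried about and renders your alternating scheme unnecessary. Your final product-of-bounds trick is fine, but you never reach the point where it applies.
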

\begin{proof}
Observe that due to $V=V_0$ on the stable and unstable manifold (Proposition \ref{lem_v=v_0atWuWs}), combined with Proposition \ref{prop_lem_standardizing_on_WuWs}, the Taylor expansion of $V$ reduces to
\begin{equation}\label{eqn_assumption_in_L_V}
\CL_V x_i=V_i=\lambda_i x_i+
    \sum_{|\beta|\geq 2}C_{\beta,i}(\bx_k) \bx_{n-k}^\beta +
    \sum_{|\gamma|\geq 2}C_{\gamma,i}(\bx_{n-k}) \bx_{k}^\gamma + O(|\bx_k|^\infty|\bx_{n-k}^\infty|),
\end{equation}
where $\beta=(\beta_1,\dots,\beta_{n-k})$ and $\gamma=(\gamma_1,\dots,\gamma_k)$ are multi-indices,  $i=1\dots,n$, and $C_{\beta,i}(\bx_k)=O(|\bx_k|^\infty)$, $C_{\gamma,i}(\bx_{n-k})=O(|\bx_{n-k}|^\infty)$. This is true  on a small neighbourhood $U$ near the critical point. In addition,
\[\left\{
  \begin{array}{l}
    C_{\beta,i}(\bx_k)=\frac{1}{\beta!}\left(\left(\frac{\partial}{\partial \bx_{n-k}}\right)^\beta\CL_{V-V_0}x_i\right)(\bx_k,0)\\
    C_{\gamma,i}(\bx_{n-k}) =\frac{1}{\gamma!}\left(\left(\frac{\partial}{\partial \bx_{k}}\right)^\gamma\CL_{V-V_0}x_i\right)(0,\bx_{n-k}).\\
  \end{array}
\right.\]
Apparently, we only need $|\beta|,|\gamma|\geq \alpha$ instead of $\geq 1$ to conclude our proof. To achieve this, we will construct a sequence of local coordinates that get rid of the (m+1)-th order terms at m-th step:
\[{}^{0}x_i=x_i,\]
\begin{equation}\label{eqn_induction_on_x_i}
{}^{m+1}x_i={}^mx_i+
\left\{
  \begin{array}{l}
    \sum_{|\ba|=m+1}A_{\ba,i}({}^m\bx_{n-k})\,{}^m\bx_k^\ba,\ i=1,\dots,k, \\
    \sum_{|\bb|=m+1}B_{\bb,i}({}^m\bx_{k})\,{}^m\bx_{n-k}^\bb,\  i=k+1,\dots,n,  \\
  \end{array}
\right.
\end{equation}
where $\ba,\bb$ are multi-indices with k and (n-k) entries respectively.\\
The induction hypothesis is
\begin{equation}\label{eqn_indution_hypo}
\CL_V {}^mx_i= \lambda_i{}^mx_i+
\sum_{|\beta|\geq m+1}{}^mC_{\beta,i}({}^m\bx_k) {}^m\bx_{n-k}^\beta +
\sum_{|\gamma|\geq m+1}{}^mC_{\gamma,i}({}^m\bx_{n-k}) {}^m\bx_{k}^\gamma,
\end{equation}
Specifically, that ${}^mC_{\beta, i}({}^m\bx_k)$ and ${}^mC_{\gamma,i}({}^m\bx_{n-k})$ are locally flat is part of our assumption. \\
Then it is sufficient to prove the (m+1)-th hypothesis from the m-th. In fact, because ${}^mx_i$ is dominated by the linear part, all we need is that $\CL_V{}^{m+1}x_i-\lambda_i{}^{m+1}x_i$ has vanishing (m+1)-th order term, in terms of ${}^mx_i$.\\
Applying $\CL_V$ to ${}^{m+1}x_i$ in \eqref{eqn_induction_on_x_i} gives us
\begin{equation}\label{eqn_apply_assumption_to_induc}
\CL_V{}^{m+1}x_i= \CL_V {}^mx_i+\left\{
  \begin{array}{l}
    \sum_{|\ba|=m+1}\CL_V(A_{\ba,i}({}^m\bx_{n-k})\,{}^m\bx_k^\ba),\ i=1,\dots,k, \\
    \sum_{|\bb|=m+1}\CL_V(B_{\bb,i}({}^m\bx_{k})\,{}^m\bx_{n-k}^\bb),\  i=k+1,\dots,n.  \\
  \end{array}
\right.
\end{equation}
For simplicity we omit condition on $i$'s,
\begin{align}
\nonumber &\CL_V{}^{m+1}x_i-\lambda_i{}^{m+1}x_i\\
\nonumber &\overset{\eqref{eqn_apply_assumption_to_induc}, \eqref{eqn_induction_on_x_i}}=\CL_V {}^mx_i-\lambda_ix_i+\left\{
  \begin{array}{l}
    \!\sum_{|\ba|=m+1}\CL_V(A_{\ba,i}({}^m\bx_{n-k})\,{}^m\bx_k^\ba) -\lambda_iA_{\ba,i}({}^m\bx_{n-k})\,{}^m\bx_k^\ba \\
    \!\sum_{|\bb|=m+1}\CL_V(B_{\bb,i}({}^m\bx_{k})\,{}^m\bx_{n-k}^\bb) -\lambda_iB_{\bb,i}({}^m\bx_{k})\,{}^m\bx_{n-k}^\bb  \\
  \end{array}
\right.\\
\nonumber &\overset{\eqref{eqn_indution_hypo}}=
\sum_{|\beta|\geq m+1}{}^mC_{\beta,i}({}^m\bx_k) {}^m\bx_{n-k}^\beta +
\sum_{|\gamma|\geq m+1}{}^mC_{\gamma,i}({}^m\bx_{n-k}) {}^m\bx_{k}^\gamma \\
\nonumber &\indent+\left\{
  \begin{array}{l}
    \!\sum_{|\ba|=m+1}\CL_V(A_{\ba,i}({}^m\bx_{n-k})\,{}^m\bx_k^\ba) -\lambda_iA_{\ba,i}({}^m\bx_{n-k})\,{}^m\bx_k^\ba \\
    \!\sum_{|\bb|=m+1}\CL_V(B_{\bb,i}({}^m\bx_{k})\,{}^m\bx_{n-k}^\bb) -\lambda_iB_{\bb,i}({}^m\bx_{k})\,{}^m\bx_{n-k}^\bb.  \\
  \end{array}
\right.
\end{align}
Note that we only care about the vanishing $(m+1)$-order terms, so all higher orders are discarded. Also, using the assumption \eqref{eqn_indution_hypo}, we know that indeed $\CL_{V-V_0}x_i$ will at least add an order of $(m+1)$ to $\bx_k$ and $\bx_{n-k}$, rendering $\CL_{V-V_0}$ the higher order terms that we may omit. By doing this, we alter higher order coefficients at each step. Nevertheless, the local flatness of the coefficients is carried to the $(m+1)$-th step because this process is linear and finite in higher order terms. As a result,
\begin{align}
\nonumber &-\sum_{|\beta|\geq m+1}{}^mC_{\beta,i}({}^m\bx_k) {}^m\bx_{n-k}^\beta -
\sum_{|\gamma|\geq m+1}{}^mC_{\gamma,i}({}^m\bx_{n-k}) {}^m\bx_{k}^\gamma\\
\nonumber &=\left\{
  \begin{array}{l}
    \sum_{|\ba|=m+1}\CL_{V_0}(A_{\ba,i}({}^m\bx_{n-k}){}^m\bx_k^\ba) -\lambda_iA_{\ba,i}({}^m\bx_{n-k})\,{}^m\bx_k^\ba \\
    \sum_{|\bb|=m+1}\CL_{V_0}(B_{\bb,i}({}^m\bx_{k}){}^m\bx_{n-k}^\bb) -\lambda_iB_{\bb,i}({}^m\bx_{k})\,{}^m\bx_{n-k}^\bb, \\
  \end{array}
\right.
\end{align}
or more explicitly as
\begin{align}
\nonumber &\CL_{V_0}(A_{\ba,i}({}^m\bx_{n-k})) +(\sum_{j=1}^k a_j\lambda_j -\lambda_i) A_{\ba,i}({}^m\bx_{n-k})=-C_{\ba,i}({}^m \bx_{n-k}),\, i=1,\dots,k;\\
\nonumber &\CL_{V_0}(B_{\bb,i}({}^m\bx_{k})) +(\sum_{j=k+1}^n b_j\lambda_j-\lambda_i)B_{\bb,i}({}^m\bx_{k})=-C_{\bb,i}({}^m \bx_{k}),\, i=k+1,\dots,n.
\end{align}
Now we solve the first PDE at point $y_j(t)={}^m x_je^{\lambda_jt}, j=k+1,\dots,n$. From a simple calculation we learn that
\[\dint A_{\ba,i}(\by(t))=\CL_{V_0} A_{\ba,i}(\by(t)),\]
so the PDE reduces to
\[\dint A_{\ba,i}(\by(t))+ (\sum_{j=1}^k a_j\lambda_j -\lambda_i) A_{\ba,i}(\by(t))=-C_{\ba,i}(\by(t)).\]
Combined with the fact that $\by(t\rightarrow\infty)=\bo$ and $C_{\ba,i}(\bx_{n-k})=O(|\bx_{n-k}|^\infty)$, we have
\[A_{\ba,i}(\by(t))=e^{-(\sum_{j=1}^k a_j\lambda_j -\lambda_i)t}\int^\infty_0 e^{(\sum_{j=1}^k a_j\lambda_j -\lambda_i)s}C_{\ba,i}(\by(s))\, ds,\]
and the actual solution is given by the evaluation of $\by(t)$ at $0$, which is
\[A_{\ba,i}({}^m \bx_{n-k})=\int^\infty_0 e^{(\sum_{j=1}^k a_j\lambda_j -\lambda_i)s}C_{\ba,i}(\{{}^mx_je^{\lambda_js}\}_{j=k+1}^n)\, ds.\]
Because $C_{\ba,i}$ is flat at $\bo$, by dominated convergence, $A_{\ba,i}({}^m\bx_{n-k})$ exists and is smooth at least for all ${}^m\bx_{n-k}$ where $C_{\ba,i}({}^m\bx_{n-k})$ is controlled by a large enough power of ${}^m\bx_{n-k}$. Especially, if $(V-V_0)$ is supported on a sufficiently small neighbourhood containing $\bo$, which we later will require, then $A_{\ba,i}$ exists throughout the support of $(V-V_0)$.\\
Similarly, the solution to the second set of PDEs is
\[B_{\bb,i}({}^m \bx_{k})=-\int_{-\infty}^0 e^{(\sum_{j=k+1}^n a_j\lambda_j -\lambda_i)s}C_{\bb,i}(\{{}^mx_je^{\lambda_js}\}_{j=1}^{k})\, ds.\]
Repeating this induction process until $m=\alpha$, we then get the estimate $|V-V_0|(\bx)\leq C|\bx_{k}|^{\alpha}|\bx_{n-k}|^{\alpha}$ within a small neighbourhood $\hat U$, as claimed.
\end{proof}
\subsection{The contraction operator and conjugate functions.}\label{sec_contraction_operator_and_conjugate_functions}
An intuitive way of approaching the analytical Morse lemma is to seek for a dynamical system that relates a general Morse flow to its standard counterpart. The conjugate function $\lim_{T\rightarrow \infty}G_TF_{-T}$ that we used earlier on submanifolds, can be formally defined on the whole neighbourhood of $p$, and seemingly serve the propose of conjugating the generic flow with the standard one pretty well. The problem is that the existence of such a conjugation is far from straightforward, let alone its regularity. The idea behind our method here is motivated by the 1969 paper \cite{PalaisSmale69} by Palais and Smale, in which they discuss the existence of topological conjugation, which is they call structural stability, for a certain type of diffeomorphisms.\\
In this section, we shall give an analytical justification for the existence and regularity of these conjugate functions, and then construct the analytical Morse coordinates we want from them.\\
\\
From now on, we assume the following.\\
As we are only interested in the local behaviour of the flow lines, we will assume that $V=\nabla f=\sum_i\lambda_ix_i\partial_i+O_i(\bx)\partial_i$ on a open bounded coordinate neighbourhood $\mathfrak{U}$ of $p$, with $p$ mapped to $\bo$, and $V=V_0=\sum_i \lambda_ix_i\partial_i$ outside a compact neighbourhood $\mathfrak{U}_{V-V_0}$ containing $\mathfrak{U}$. In addition, $V$ is smooth on $\RR^n$. Such a vector field $V$ can be derived by the convolution $V=V_0+(\nabla f-V_0)*\phi$ with smooth bump function $\phi$ that is compactly supported and identically $1$ over $\mathfrak{U}$, with $\mathfrak{U}_{V-V_0}=\mathrm{support} (V-V_0)$.\\
Moreover, we shall assume that the conclusion of Theorem \ref{thm_estimate_V-V_0_WuWs}, namely, for all large enough positive integer $\alpha$ and constant $C=C(\alpha)$,
\[|V-V_0|(\bx)\leq C|\bx_{k}|^{\alpha}|\bx_{n-k}|^{\alpha},\]
holds for coordinates $\bx\in\mathfrak{U}_{V-V_0}$, with $p$ as $\bo$. As commented in the proof of the theorem, this is a reasonable requirement.
\subsubsection{Weighted Sobolev spaces on $(-\infty,0]$.}
\label{subsubsec3.2.1}
The following definitions of weighted Sobolev spaces are introduced by Lockhart and McOwen in their pioneer work \cite{Lockhart85} and \cite{Lockhart87}, which concern the definition and the analysis of weighted Sobolev spaces with finite open ends. By specifiying the base space to be the open ended line $(-\infty,0]$ in \cite[\S 3]{Lockhart87}, our notation is as follows.\\
Let $\bu\in C^\infty((-\infty,0];\RR^n)$, $p> 1$, $k\in \mathbb{N}$ and $\delta\in\RR$. The \textit{Sobolev norm} and the \textit{weighted Sobolev norm} of $\bu$ are
\[\|\bu\|_{p,k}=\sum_{j=0}^k\left[\int_{-\infty}^0\left|\left(\frac{d}{d t}\right)^j \bu(t)\right|^p dt \right]^{1/p},\]
\[\|\bu\|_{p,k,\delta}=\sum_{j=1}^k\left[\int_{-\infty}^0e^{-\delta p t} \left|\left(\frac{d}{d t}\right)^j \bu(t)\right|^p dt\right]^{1/p}.\]
The Sobolev space is defined as
\[L^p_k((-\infty,0];\RR^n)=\{\text{measurable function }\sigma:(-\infty,0]\rightarrow\RR^n: \|\sigma\|_{p,k}<\infty\},\]
and the local Sobolev space consists of all functions that belong to $L^p_k$ when restricted to any compact support, namely
\begin{align*}
L^p_{k,loc}((-\infty,0];\RR^n)=\{\text{measurable function }\sigma:(-\infty,0]\rightarrow\RR^n: \|\phi\sigma\|_{p,k}<\infty,\\
 \forall \phi\in C^\infty_0((-\infty,0];\RR)\}.
\end{align*}
It is easy to check that this definition coincides with the usual definition of Sobolev spaces and local Sobolev spaces. Then the \textit{weighted Sobolev space} over $(-\infty,0]$ with weight $\delta$ is
\[L^p_{k,\delta}((-\infty,0];\RR^n)=\{\bu\in L^p_{k,loc}((-\infty,0];\RR^n): \|\bu\|_{p,k,\delta}<\infty\}.\]
\subsubsection{The contraction operator $\CF$.}
\label{subsubsec3.2.2}
Let $u(t)=G_{t+T}F_{-T}(\bx)-F_{t}(\bx)$, $t\in(-\infty,0]$. Then we see that $u(t)\equiv 0$ for all $t<-T$, and $u(0)=G_TF_{-T}(\bx)-\bx$ resembles the conjugate function that we want. In addition, $u(t)$ is the solution to the ODE
\[\dint u(t)=V(u(t)+F_t(\bx))-V_0(F_t(\bx)), u(-\infty)=0.\]
This ODE has formal solution $u(t)=\int_{-\infty}^t V(u(t)+F_t(\bx))-V_0(F_t(\bx))\,ds$. Using this integration formula, we cook up an operator $\CF$,
\begin{align}
\nonumber&\CF: U\times L^p_{k,\delta}((-\infty,0];\RR^n)\rightarrow L^p_{k,\delta}((-\infty,0];\RR^n)\\
\label{eqn_functor_F}
&\CF(\bx,u(t))=\CF_\bx(u)(t)= \int_{-\infty}^t V(u(s)+F_s(\bx))\,ds-\int_{-\infty}^t V_0(F_s(\bx))ds\\
\nonumber &= \int_{-\infty}^t (V-V_0)(u(s) +F_s(\bx))\,ds+\int_{-\infty}^t V_0(u(s)+F_s(\bx))-V_0(F_s(\bx))ds,
\end{align}
where $U\subset \mathfrak{U}$ is an open coordinate chart containing the critical point, with $\bx\in U$ and the critical point $p$ corresponds to $\bo$, and for simplicity, we assume $U=B_\bo(R)$, the open ball of radius $R$ centred at $\bo$. 
\\
We claim that under suitable conditions, $\CF_\bx$ is indeed a contracting operator. To prove this, we begin with the fact that $\CF_\bx$ is shirking within its range.\\
Let $\Omega(r)$ be the closed convex domain $\Omega(r)\subset L^p_{k,\delta}((-\infty,0];\RR^n)$, defined by
\[\Omega=\{\gamma\in L^p_{k,\delta}:\|\gamma\|_{p,k,\delta}\leq r\},\]
for some constant $r$. This domain owns its convexity to that of the $L^p_{k,\delta}$-norm.
\begin{proposition}\label{lem_CF_shrink_in_range}
For suitably chosen $\delta,r>0$, there exists a positive real number $C<1$, such that
 \[\|\CF_\bx(\gamma_1)-\CF_\bx(\gamma_2)\|_{p,k,\delta}\leq C\|\gamma_1-\gamma_2\|_{p,k,\delta},\]
 for all $\gamma_1,\gamma_2\in \Omega(r)$ and all $\bx\in U$.
\end{proposition}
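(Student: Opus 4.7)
The plan is to exploit two structural features of $\CF_\bx$. First, $\CF_\bx$ is an integration $\int_{-\infty}^t$, so differentiating in $t$ inverts it:
\[\dint\bigl[\CF_\bx(\gamma_1)-\CF_\bx(\gamma_2)\bigr](t)=V(\gamma_1(t)+F_t(\bx))-V(\gamma_2(t)+F_t(\bx)),\]
and the $\bx$-dependent drift $V_0(F_s(\bx))$ drops out of the difference entirely. Second, the weighted norm $\|\cdot\|_{p,k,\delta}$ only sees derivatives ($j\geq 1$), and I expect Lemma \ref{lem_integration_with_delta} to furnish a weighted Hardy inequality of the form $\|\int_{-\infty}^{\cdot}h\,ds\|_{L^p_\delta}\leq\delta^{-1}\|h\|_{L^p_\delta}$, making integration a $\delta^{-1}$-contraction on the relevant weighted spaces. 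A sufficiently large $\delta$ then converts bounded multiplicative factors from the integrand into small ones.

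To execute this, I split $V=V_0+(V-V_0)$ and use the linearity of $V_0$:
\[V(\gamma_1+F_t(\bx))-V(\gamma_2+F_t(\bx))=V_0(\Delta)+\bigl[(V-V_0)(\gamma_1+F_t(\bx))-(V-V_0)(\gamma_2+F_t(\bx))\bigr],\]
with $\Delta:=\gamma_1-\gamma_2$. The linear piece is pointwise dominated by $\Lambda:=\max_i|\lambda_i|$ times $|\Delta|$. For the nonlinear piece, the mean value theorem gives $\int_0^1 D(V-V_0)(\bar\gamma+F_t(\bx))\,d\theta\cdot\Delta$ with $\bar\gamma=\theta\gamma_1+(1-\theta)\gamma_2$. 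Theorem \ref{thm_estimate_V-V_0_WuWs} and its differentiated versions show that $|D^m(V-V_0)|$ can be made arbitrarily small on a small neighbourhood of $\bo$, and since $V-V_0$ is compactly supported, contributions from outside that neighbourhood vanish identically. With $\gamma_i\in\Omega(r)$ and $\bx\in U=B_\bo(R)$, the resulting smallness threshold $\varepsilon(r,R)$ on the nonlinear factor is driven to zero by shrinking $r$ and $R$.

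For each $1\leq j\leq k$, I would compute $\dint^j\bigl[\CF_\bx(\gamma_1)-\CF_\bx(\gamma_2)\bigr]$ by iterated differentiation using the Fa\`a di Bruno chain rule. The leading term is the linear contribution $V_0(\Delta^{(j-1)})$, and every remainder carries at least one factor of $D^m(V-V_0)$ evaluated along $\gamma_i+F_\cdot(\bx)$. The linear contributions are handled by Hardy: $\|V_0(\Delta^{(j-1)})\|_{L^p_\delta}\leq \tfrac{\Lambda}{\delta}\|\Delta^{(j)}\|_{L^p_\delta}\leq \tfrac{\Lambda}{\delta}\|\Delta\|_{p,k,\delta}$; the nonlinear remainders are absorbed by $\varepsilon(r,R)$ multiplied by suitable combinations of $\|\Delta^{(l)}\|_{L^p_\delta}$ with $1\leq l\leq k$. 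Summing yields
\[\|\CF_\bx(\gamma_1)-\CF_\bx(\gamma_2)\|_{p,k,\delta}\leq \Bigl(\tfrac{C_1\Lambda}{\delta}+C_2\varepsilon(r,R)\Bigr)\|\Delta\|_{p,k,\delta},\]
and choosing $\delta$ large and $r,R$ small gives a contraction constant strictly below $1$, uniform in $\bx\in U$.

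The hard part will be the top derivative $j=k$: the naive estimate
\[\|\dint^k[\CF_\bx(\gamma_1)-\CF_\bx(\gamma_2)]\|_{L^p_\delta}\leq \Lambda\|\Delta^{(k-1)}\|_{L^p_\delta}\]
has no obvious $\delta^{-1}$ gain, since there is no $\Delta^{(k+1)}$ to apply Hardy against. Resolving this requires lifting Hardy one step higher, which is legitimate provided $\Delta^{(k-1)}(-\infty)=0$, a decay forced by $\Delta^{(k)}\in L^p_\delta$ with $\delta>0$. A parallel bookkeeping subtlety comes from the chain-rule terms coupling derivatives of $\gamma_i$ with derivatives of $F_t(\bx)_i=x_ie^{\lambda_it}$: in the directions with $\lambda_i<0$ these grow as $t\to-\infty$, and must be balanced against the weight $e^{-\delta pt}$ and the flatness of $V-V_0$, thereby imposing a lower bound on $\delta$ in terms of the Morse eigenvalues.
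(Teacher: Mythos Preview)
Your decomposition $V=V_0+(V-V_0)$, your appeal to the Hardy-type Lemma~\ref{lem_integration_with_delta}, and your use of Fa\`a di Bruno all match the paper's strategy. However, there is a genuine gap, and the paper's organization avoids both it and your ``hard part at $j=k$'' in one stroke.

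The gap is the claim that the nonlinear factor $\varepsilon(r,R)$ is driven to zero by shrinking $r$ and $R$. For indices with $\lambda_i<0$ the curve $F_t(\bx)_i=x_ie^{\lambda_it}$ grows without bound as $t\to-\infty$, so even with $|\bx|\leq R$ and $\|\gamma_i\|_{p,k,\delta}\leq r$ arbitrarily small, the argument $\bar\gamma(t)+F_t(\bx)$ sweeps through the entire fixed compact support $\mathfrak{U}_{V-V_0}$ for suitable intermediate $t$. On that support $|D^m(V-V_0)|$ is merely bounded by $\|V-V_0\|_{\infty,k+1}$, not small, and this bound does not improve as $r,R\to 0$. (You half recognize this in your last paragraph, but the balancing mechanism you sketch there --- weight versus flatness --- is incompatible with the uniform pointwise $\varepsilon(r,R)$ bound you invoke earlier; the two parts of the proposal pull in different directions.) One could instead shrink the \emph{support} of the cutoff and use flatness, but that is a different parameter from $r,R$.

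The paper avoids this entirely by applying Lemma~\ref{lem_integration_with_delta} \emph{once, globally}, to the integral
\[
\CF_\bx(\gamma_1)-\CF_\bx(\gamma_2)=\int_{-\infty}^t w(s)\,ds,
\]
obtaining $\|\CF_\bx(\gamma_1)-\CF_\bx(\gamma_2)\|_{p,k,\delta}\leq \frac{C_0}{\delta}\|w\|_{p,k,\delta}$, and only then bounding $\|w\|_{p,k,\delta}$ by a fixed constant (depending on $\|V-V_0\|_{\infty,k}$, $\max_i|\lambda_i|$, $r$, $R$, and Sobolev constants) times $\|\gamma_1-\gamma_2\|_{p,k,\delta}$. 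The single prefactor $\frac{C_0}{\delta}$ multiplies \emph{both} the linear and the nonlinear contributions, so taking $\delta$ large makes the whole constant $<1$; no smallness of $r,R$ is needed at this stage. This also dissolves your top-order worry: the $\delta^{-1}$ gain comes from the outer integration, not from a further Hardy step on $\Delta^{(k-1)}$, so there is nothing special about $j=k$.
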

\noindent
To prove this, we will need the following lemma.
\begin{lem}\label{lem_integration_with_delta}
Given $\delta>0$, $k\in \mathbb{N}$. Let $w(t)\in L^p_{k,\delta}((-\infty,0],\RR^n)$
. Then for any $t\in(-\infty,0]$,
\[\|\int^s_{-\infty}w(u)\,du\|_{p,k,\delta;s\in(-\infty,t]}\leq \frac{C_0}{\delta}\|w(s)\|_{p,k,\delta;s\in(-\infty,t]},\]
where
 $C_0$ is a constant that only depends on $p$.
\end{lem}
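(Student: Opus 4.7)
The plan is to reduce the lemma to a one-dimensional weighted Hardy-type inequality and then handle the $k$ derivatives by a short recursive argument. Set $W(s) := \int_{-\infty}^s w(u)\, du$. By the fundamental theorem of calculus, $W^{(j)}(s) = w^{(j-1)}(s)$ for every $j \geq 1$, so the higher-derivative pieces of the weighted norm $\|W\|_{p,k,\delta;(-\infty,t]}$ are literally the norms of lower derivatives of $w$ shifted by one in index. All the substantive work therefore reduces to controlling the zeroth-order weighted $L^p$ norm of $W$ in terms of that of $w$.

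For this key step I introduce the pointwise substitution $\tilde u(s) := e^{-\delta s} u(s)$, under which the weighted quantity $\bigl(\int e^{-\delta p s} |u|^p\, ds\bigr)^{1/p}$ becomes the ordinary $L^p$ norm of $\tilde u$. A direct computation rewrites
\[
\tilde W(s) = \int_{-\infty}^s e^{-\delta(s-u)}\, \tilde w(u)\, du,
\]
that is, $\tilde W = K * \tilde w$, where $K(s) = e^{-\delta s}\mathbf{1}_{[0,\infty)}(s)$ is the one-sided exponential kernel with $\|K\|_{L^1} = 1/\delta$. Young's convolution inequality then yields $\|\tilde W\|_{L^p} \leq (1/\delta)\|\tilde w\|_{L^p}$, which is the zeroth-order estimate in weighted form, with a constant independent of $p$.

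To assemble the full inequality on $\|W\|_{p,k,\delta;(-\infty,t]}$, the zeroth-order bound is combined with the identities $W^{(j)} = w^{(j-1)}$ for $j \geq 1$, together with a further application of the same Hardy step to $w$ itself, viewed as $w(s) = \int_{-\infty}^s w'(u)\, du$. This last step is justified because $w \in L^p_{k,\delta}$ with $\delta > 0$ forces sufficient decay of $w$ at $-\infty$; it allows any stray zeroth-order term to be absorbed into a first-derivative term, and hence into $\|w\|_{p,k,\delta;(-\infty,t]}$. The restriction to the subinterval $(-\infty,t]$ is automatic because $K$ is supported on $[0,\infty)$: the value of $\tilde W(s)$ for $s \leq t$ depends only on $\tilde w(u)$ for $u \leq s \leq t$, so the same convolution estimate holds verbatim on the truncated interval after extending $w$ by zero on $(t,0]$.

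The main technical obstacle will be keeping the final constant genuinely of the form $C_0/\delta$ with $C_0$ depending only on $p$, rather than being contaminated by additional $\delta$-dependent factors arising from the triangle-inequality combination of the $k$ summands in the definition of $\|\cdot\|_{p,k,\delta}$. The advantage of the convolution/Young approach over a direct pointwise Hölder estimate is precisely that the relevant $L^1$ kernel norm scales as $1/\delta$, rather than $\delta^{-1/p'}$, which would spoil the required $1/\delta$ dependence; monitoring this scaling through each summand is the delicate bookkeeping step.
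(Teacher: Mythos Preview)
Your proposal is correct and, for the base case $k=0$, takes a genuinely different route from the paper. The paper argues by integration by parts on $\int_{-\infty}^t e^{-\delta p s}\bigl|\int_{-\infty}^s w\bigr|^p\,ds$, then applies H\"older to each resulting term, arriving at an algebraic inequality of the form $A^p \leq c_p\,\delta^{-p}B^p + \delta^{-1}BA^{p-1}$ with $A=\|\int w\|_{p,0,\delta}$ and $B=\|w\|_{p,0,\delta}$; solving this yields $A\leq (C_0/\delta)B$ with $C_0$ the root of a polynomial depending only on $p$. Your substitution $\tilde w(s)=e^{-\delta s}w(s)$ is cleaner: it turns the weighted estimate into the ordinary $L^p$ bound for convolution with the one-sided kernel $K(s)=e^{-\delta s}\mathbf{1}_{[0,\infty)}$, and Young's inequality gives the result in one line with the sharper constant $C_0=1$, independent of $p$. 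For $k>0$ the two approaches coincide: both use $W^{(j)}=w^{(j-1)}$ together with $w^{(j-1)}(s)=\int_{-\infty}^s w^{(j)}$ (justified by density of compactly supported functions in $L^p_{k,\delta}$, which the paper invokes explicitly and you invoke via the decay forced by $\delta>0$) to reduce each summand to the $k=0$ case. Your worry about $\delta$-contamination in the bookkeeping is unfounded once you organise the sum this way: each of the $k+1$ summands picks up exactly one factor of $1/\delta$, so the final constant is indeed of the form $C_0/\delta$.
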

\begin{proof}
Throughout this proof, let $w(t)\in L^p_{k,\delta}((-\infty,0],\RR^n)$ with $w(t)\equiv0$ for all $t<-T$, given some large enough $T$. Note that using the fact that $C^{\infty}_{cs}((-\infty, 0];\RR^n)$ is dense in $L^p_{k,\delta}((-\infty,0],\RR^n)$, this proof holds without assuming that $w(t)$ is compactly supported.\\
First let $k=0$.\\
It is sufficient to prove that for $w\in L^p_{0,\delta}((-\infty,t],\RR)$ with $w(t)\equiv0$ for all $t<-T$,  $\|\int^t_{-\infty}w(s)\,ds\|_{p,\delta}\leq \frac{C}{\delta}\|w(t)\|_{p,\delta}$.\\
Integrating by parts and using the fact that $w(t)\equiv0$ for all small enough $t$, we get
\begin{align} \nonumber&\|\int^s_{-\infty}w(u)\,du\|^p_{p,0,\delta;s\in(-\infty,t]}\\
\nonumber &= \int^t_{-\infty}e^{-\delta ps}ds\left|\int_{-\infty}^s w(u)du\right|^p\\
\nonumber &= -\left. \frac{e^{-\delta ps}}{\delta p}\left(\int_{-\infty}^s w(u)du\right)^p\right|_{s=-\infty}^{t}+ \int^t_{-\infty} \frac{e^{-\delta ps}}{\delta}w(s)\left(\int_{-\infty}^s w(u)du\right)^{p-1} ds\\
&\leq \underbrace{\frac{e^{-\delta pt}}{\delta p}\left|\int_{-\infty}^t w(u)du\right|^p}_{I}+ \underbrace{\int^t_{-\infty} \frac{e^{-\delta ps}}{\delta}w(s)\left(\int_{-\infty}^s w(u)du\right)^{p-1} ds}_{II}.\label{eqn_norm_p,0,delta}
\end{align}
For each of the two terms, we apply Cauchy-Schwartz inequality to get
\begin{align}
\nonumber I&=\frac{e^{-\delta pt}}{\delta p}\left|\int_{-\infty}^t e^{\delta u}e^{-\delta u}w(u)du\right|^p\\
\nonumber &\leq \frac{e^{-\delta pt}}{\delta p} \left[\left|\int_{-\infty}^t e^{\delta q u}du \right|^{1/q} \left|\int_{-\infty}^t e^{-\delta p u}|w(u)|^p du\right|^{1/p}\right]^{p}\\
\nonumber &=\frac{e^{-\delta pt}}{\delta p} \left[\left. \frac{e^{\delta qu}}{\delta q}\right|_{-\infty}^t\right]^{p/q} \int_{-\infty}^t e^{-\delta p u}|w(u)|^p du\\
\nonumber &=\frac{(p-1)^{p-1}}{\delta^p p^p}\int_{-\infty}^t e^{-\delta p u}|w(u)|^p du,
\end{align}
and
\begin{align}
\nonumber II&=\frac{1}{\delta}\int^t_{-\infty} e^{-\delta s}w(s)\left(e^{-\delta s}\int_{-\infty}^s w(u)du\right)^{p-1} ds\\
\nonumber &\leq \frac{1}{\delta} \left|\int_{-\infty}^t e^{-\delta p s}|w(s)|^p ds\right|^{1/p} \left|\int^t_{-\infty} e^{-\delta p s}\left|\int_{-\infty}^s w(u)du\right|^{p} ds\right|^{1-1/p}.
\end{align}
We relabel the two norms as following
\begin{align}
\nonumber &A:=\|\int^s_{-\infty}w(u)\,du\|_{p,0,\delta;s\in(-\infty,t]}= \left[\int^t_{-\infty}e^{-\delta ps}ds\left|\int_{-\infty}^s w(u)du\right|^p\right]^{1/p},\\
\nonumber &B:=\|w(s)\|_{p,0,\delta;s\in(-\infty,t]}=\left[\int_{-\infty}^t e^{-\delta p s}|w(s)|^p ds\right]^{1/p},
\end{align}
then the original inequality \eqref{eqn_norm_p,0,delta} simplifies to
\[A^p\leq \frac{(p-1)^{p-1}}{\delta^p p^p}B^p+\frac{1}{\delta}BA^{p-1}.\]
In non-trivial cases where $AB\neq 0$, we have
\[1\leq \frac{(p-1)^{p-1}}{ p^p}\left(\frac{B}{\delta A}\right)^p+\frac{B}{\delta A}.\]
Let $c$ be the positive solution to $1=\frac{(p-1)^{p-1}}{ p^p}c^p+c$, then the monotonicity of this polynomial in $\frac{B}{\delta A}$ gives $c\leq \frac{B}{\delta A}$. Let the constant $C_0=1/c$, which obviously only depends on $p$. As a result,
\[A\leq \frac{C_0}{\delta}B,\]
which is what we claim.\\
Secondly, $k>0$. By the definition of $\|\cdot\|_{p,k,\delta}$,
\begin{align}
\nonumber \Big\|\int^s_{-\infty} w(u)\,du\Big\|_{p,k,\delta}&=\sum_{j=0}^k \Big\|\left(\frac{d}{ds}\right)^j\int^s_{-\infty} w(u)\,du\Big\|_{p,\delta}\\
\nonumber & \leq \frac{C_0}{\delta}\sum_{j=0}^k\Big\|\left(\frac{d}{ds}\right)^jw(s)\Big\|_{p,\delta}= \frac{C_0}{\delta}\|w(s)\|_{p,k,\delta}.
\end{align}
\end{proof}

\begin{proof}[Proof of Proposition \ref{lem_CF_shrink_in_range}]
With the help of Lemma \ref{lem_integration_with_delta}, we can control $\CF_\bx(\gamma_1)-\CF_\bx(\gamma_2)$ with its integrand, as it is an integration:
\begin{align}
\nonumber &\|\CF_\bx(\gamma_1)-\CF_\bx(\gamma_2)\|_{p,k,\delta}\\
\nonumber &= \Big\|\int_{-\infty}^t (V-V_0)(\gamma_1(s) +F_s(\bx))-(V-V_0)(\gamma_2(s) +F_s(\bx))\,ds\\
\nonumber&\indent+\int_{-\infty}^t V_0(\gamma_1(s)+F_s(\bx))-V_0(\gamma_2(s)+F_s(\bx))ds \Big\|_{p,k,\delta}\\
\nonumber &\leq \frac{C_0}{\delta}\|(V-V_0)(\gamma_1(s) +F_s(\bx))-(V-V_0)(\gamma_2(s) +F_s(\bx))\|_{p,k,\delta}\\
&\indent +\frac{C_0}{\delta}\|V_0(\gamma_1(s)+F_s(\bx))-V_0(\gamma_2(s) +F_s(\bx))\|_{p,k,\delta}. \label{eqn_contracting_parts}
\end{align}
As $V_0$ is globally linear, we fully understand that the second term of \eqref{eqn_contracting_parts}, which is controlled by
\[\frac{C_0}{\delta}\|V_0(\gamma_1(s)+F_s(\bx))-V_0(\gamma_2(s) +F_s(\bx))\|_{p,k,\delta} \leq\frac{C_0}{\delta}\max\limits_{i=1,\dots,n}\{|\lambda_i|\} \|\gamma_1-\gamma_2\|_{p,k,\delta}.\]
Hence the problem amounts to finding an upper bound for the first term,  $\left(\|(V-V_0)(\gamma_1(s) +F_s(\bx))-(V-V_0)(\gamma_2(s) +F_s(\bx))\|_{p,k,\delta}\right)$.\\
Here are two helpful observations which we will use extensively in bounding the first term. One observation is the \emph{chain rule for higher order derivatives}
\begin{equation}\label{eqn_general_chain_rule}
\left(\dint\right)^jf(g(t))=\sum_{l_1,\dots,l_j}\frac{j!}{l_1!\cdots l_j!} \nabla^lf(g(t))\left(\frac{g^{(1)}(t)}{1!}\right)^{l_1}\cdots \left(\frac{g^{(j)}(t)}{j!}\right)^{l_j},
\end{equation}
where $j=l_1+2l_2+\dots+jl_j$ with the summation done over all such possible $l_i$'s, and $l:=l_1+\dots+l_j$. And the other observation is the pointwise control of the difference of two products,
\begin{align}
\nonumber |&x_1\cdots x_n-y_1\cdots y_n|\\
& \leq \sum_{i=1}^n \|x_1,y_1\|_{\infty}\cdots \|x_{i-1},y_{i-1}\|_{\infty} |x_i-y_i| \|x_{i+1},y_{i+1}\|_{\infty}\cdots \|x_{n},y_{n}\|_{\infty},\label{eqn_difference_of_products}
\end{align}
which can be proven by a simple argument of induction.\\
Now, let us examine the first term, which is
\begin{align}
\nonumber&\|(V-V_0)(\sigma_1)-(V-V_0)(\sigma_2)\|_{p,k,\delta} \\ \nonumber&\indent =\sum_{j=0}^k\left\|\left(\dint\right)^j \left((V-V_0)(\sigma_1)-(V-V_0)(\sigma_2)\right)\right\|_{p,\delta},
\end{align}
with $\sigma_i=\gamma_i(s)+F_s(\bx)$, and we start with a pointwise estimate for each $t$-derivative. For $j>1$, we have
\begin{align}
\nonumber &\left|\left(\dint\right)^j \left((V-V_0)(\sigma_1)-(V-V_0)(\sigma_2)\right)\right|\\
\nonumber &\leq \Big| \sum_{l_1,\dots,l_j} \frac{j!}{l_1!\cdots l_j!} \frac{1}{(1!)^{l_1}\cdots (j!)^{l_j}}\big\{\nabla^l(V-V_0)(\sigma_1)[\sigma_1^{(1)}(t)]^{l_1}\cdots [\sigma_1^{(j)}(t)]^{l_j}\\
\nonumber & \indent - \nabla^l(V-V_0)(\sigma_2)[\sigma_2^{(1)}(t)]^{l_1}\cdots [\sigma_2^{(j)}(t)]^{l_j}\big\}\Big| \\
\nonumber &\leq \sum_{l_1,\dots,l_j} \frac{j!}{l_1!\cdots l_j!} \frac{1}{(1!)^{l_1}\cdots (j!)^{l_j}} \\
\nonumber &\Big\{|\nabla^l(V-V_0)(\sigma_1(t))-\nabla^l(V-V_0)(\sigma_2(t))|\  \|\sigma_1^{(1)},\sigma_2^{(1)}\|_\infty^{l_1}\cdots \|\sigma_1^{(j)},\sigma_2^{(j)}\|_\infty^{l_j}\\
\nonumber & + \sum_{i=1}^j \|\nabla^l(V-V_0)(\sigma_1),\nabla^l(V-V_0)(\sigma_2)\|_\infty \|\sigma_1^{(1)},\sigma_2^{(1)}\|_\infty^{l_1} \cdots |\sigma_1^{(i)}(t)-\sigma_2^{(i)}(t)|\\
\nonumber & \cdot l_i\|\sigma_1^{(i)}-\sigma_2^{(i)}\|_\infty^{l_i -1} \cdots \|\sigma_1^{(j)},\sigma_2^{(j)}\|_\infty^{l_j}
\Big\},
\end{align}
combining constants and taking $\|\sigma\|_{\infty,j}=\sum_{i=0}^j\|\sigma^{(i)}\|_{\infty}$,
\begin{align}
\nonumber & \leq \sum_{l_1,\dots,l_j} \frac{j!}{l_1!\cdots l_j!} \frac{1}{(1!)^{l_1}\cdots (j!)^{l_j}} \Big\{ \|\nabla^{l+1}(V-V_0)\|_\infty \|\sigma_1,\sigma_2\|_{\infty,j}^l |\sigma_1(t)-\sigma_2(t)|\\
\nonumber &\indent +\sum_{i=1}^j \|\nabla^{l}(V-V_0)\|_\infty \|\sigma_1,\sigma_2\|_{\infty,j}^{l-1}\cdot l_i|\sigma_1^{(i)}(t) -\sigma_2^{(i)}(t)|\Big\}
\end{align}
and denoting $|\sigma(t)|_{*,j}=\sum_{i=0}^j|\sigma^{(i)}(t)|$ gives
\begin{align*}
\nonumber  \leq \sum_{l_1,\dots,l_j} \frac{j!}{l_1!\cdots l_j!} \frac{1}{(1!)^{l_1}\cdots (j!)^{l_j}} &\Big\{\|\nabla^{l+1}(V-V_0)\|_\infty \|\sigma_1,\sigma_2\|_{\infty,j}^l \\
\nonumber &+l\|\nabla^{l}(V-V_0)\|_\infty \|\sigma_1,\sigma_2\|_{\infty,j}^{l-1}\Big\}\cdot |\sigma_1(t)-\sigma_2(t)|_{*,j}.
\end{align*}
Given this pointwise estimate, we now place a bound on the corresponding $p,k,\delta$-norm term,
\begin{align}
\nonumber &\|(V-V_0)(\sigma_1)-(V-V_0)(\sigma_2)\|_{p,k,\delta}\\
\nonumber &\leq \sum_{j=1}^k \sum_{l_1,\dots,l_j} \frac{j!}{l_1!\cdots l_j!} \frac{1}{(1!)^{l_1}\cdots (j!)^{l_j}} \Big\{\|\nabla^{l+1}(V-V_0)\|_\infty \|\sigma_1,\sigma_2\|_{\infty,j}^l +l\|\nabla^{l}(V-V_0)\|_\infty\\
\nonumber &\indent \cdot \|\sigma_1,\sigma_2\|_{\infty,j}^{l-1}\Big\}\cdot \|\sigma_1(t)-\sigma_2(t)\|_{p,k,\delta} + \|\nabla(V-V_0)\|_\infty \|\sigma_1(t)-\sigma_2(t)\|_{p,k,\delta}\\
\nonumber &\leq C_a\|V-V_0\|_{\infty,k} \, \max\{\|\sigma_1,\sigma_2\|_{\infty,k}^k,1\} \, \|\sigma_1-\sigma_2\|_{p,k,\delta},
\end{align}
where $C_a$ is an algebraic constant that only depends on $k$. We may also assume that $p$ is big enough in our case, so that by Sobolev embedding, $\|\sigma_i\|_{\infty,k}\leq C_S(\|\gamma_i\|_{p,k,\delta}+R)$, where $R$ is the radius of domain $U\subset \RR^n$, and $C_S$ is the Sobolev constant. As a result,
\begin{align*}
\nonumber &\|\CF_\bx(\gamma_1)-\CF_\bx(\gamma_2)\|_{p,k,\delta}\\
\nonumber &\leq \frac{C_0}{\delta}\left[C_a\|V-V_0\|_{\infty,k} \, \max_{i=1,2}\{C_S^k(\|\gamma_i\|_{p,k,\delta}+R)^k,1\}+\max_{i=1,\dots,n}|\lambda_i| \right] \|\gamma_1-\gamma_2\|_{p,k,\delta}\\
&\leq \frac{C_0}{\delta}\left[C_a\|V-V_0\|_{\infty,k} \, \max\{C_S^k(r+R)^k,1\}+\max_{i=1,\dots,n}|\lambda_i| \right] \|\gamma_1-\gamma_2\|_{p,k,\delta}\\
\nonumber & = C\|\gamma_1-\gamma_2\|_{p,k,\delta},
\end{align*}
with $C=\frac{C_0}{\delta}\left[C_a\|V-V_0\|_{\infty,k} \, \max\{C_S^k(r+R)^k,1\}+\max_{i=1,\dots,n}|\lambda_i| \right]$. \\
To force this constant $C<1$, it is sufficient to require $\delta$ to be large. As a result, $\CF$ is a contraction mapping.
\end{proof}
\noindent
We will assume that $\delta$ is fixed from now on.\\
The following lemma guarantees the choice of nice constants such that the zero function does not get ``drifted'' too far away by $\CF$.
\begin{proposition}\label{lem_CF_drifting_domain}
Given $\delta>0$, $r>0$ such that the positive contraction constant $C<1$ is chosen as before,
 then
\[\|\CF_\bx(0)(t)\|_{p,k,\delta}\leq (1-C)r,\]
for all $\bx\in U$.
\end{proposition}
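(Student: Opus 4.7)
The plan is to exploit the cancellation that occurs in $\CF_\bx(0)$ and then combine Lemma \ref{lem_integration_with_delta} with the sharp decay estimate from Theorem \ref{thm_estimate_V-V_0_WuWs}. Setting $u\equiv 0$ in \eqref{eqn_functor_F}, the two $V_0$-integrals cancel, so
\[\CF_\bx(0)(t) = \int_{-\infty}^t (V-V_0)(F_s(\bx))\,ds.\]
Applying Lemma \ref{lem_integration_with_delta} with $w(s)=(V-V_0)(F_s(\bx))$ reduces the task to proving
\[\|(V-V_0)(F_s(\bx))\|_{p,k,\delta} \leq \frac{\delta(1-C)r}{C_0}.\]

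Next, I would derive a pointwise exponential decay estimate for the integrand. Since $V-V_0$ is compactly supported in $\mathfrak{U}_{V-V_0}$, on which Theorem \ref{thm_estimate_V-V_0_WuWs} yields $|V-V_0|(\by)\leq C_\alpha|\by_k|^\alpha|\by_{n-k}|^\alpha$ for all sufficiently large $\alpha$, and the stable factor $|\by_{n-k}|$ is uniformly bounded by some constant $M$ on this compact set. For $\bx\in U=B_\bo(R)$ and $s\leq 0$, the unstable components of $F_s(\bx)_i = x_ie^{\lambda_is}$ (with $\lambda_i>0$ for $i\leq k$) satisfy $|F_s(\bx)_k|\leq Re^{\lambda^+_{\min} s}$, where $\lambda^+_{\min}=\min_{i\leq k}\lambda_i>0$. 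Combining yields
\[|(V-V_0)(F_s(\bx))|\leq C_\alpha M^\alpha R^\alpha e^{\alpha\lambda^+_{\min}s}.\]

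The main technical step is to extend this pointwise bound to all $t$-derivatives up to order $k$ that enter the $\|\cdot\|_{p,k,\delta}$-norm. By the higher-order chain rule \eqref{eqn_general_chain_rule}, $(d/ds)^j(V-V_0)(F_s(\bx))$ is a finite sum of terms of the shape $\nabla^l(V-V_0)(F_s(\bx))$ multiplied by products of $\lambda_i^{n_i}x_ie^{\lambda_is}$. Because Theorem \ref{thm_estimate_V-V_0_WuWs} may be invoked at arbitrarily high $\alpha$, and because $V-V_0$ is flat on both coordinate submanifolds $W^u,W^s$ by Proposition \ref{lem_v=v_0atWuWs}, each $\nabla^l(V-V_0)$ admits the same type of $|\by_k|^{\alpha'}|\by_{n-k}|^{\alpha'}$ bound on the support (with possibly diminished exponent but still arbitrarily large). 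This produces uniform estimates
\[\left|\left(\frac{d}{ds}\right)^j(V-V_0)(F_s(\bx))\right|\leq C_{\alpha,j}R^\alpha e^{\alpha\lambda^+_{\min}s},\quad j=0,1,\dots,k.\]

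Finally, choosing $\alpha$ large enough that $\alpha\lambda^+_{\min}>\delta$ makes the weighted integrals convergent:
\[\int_{-\infty}^0 e^{-\delta ps}e^{\alpha\lambda^+_{\min}ps}\,ds = \frac{1}{p(\alpha\lambda^+_{\min}-\delta)},\]
and summing over $j=0,\dots,k$ gives $\|(V-V_0)(F_s(\bx))\|_{p,k,\delta}\leq C'_\alpha R^\alpha$. Enlarging $\alpha$ further, or equivalently shrinking $R$ (consistent with the ``suitably chosen'' clause of Proposition \ref{lem_CF_shrink_in_range}, since shrinking $R$ only makes the contraction constant $C$ smaller), makes $C'_\alpha R^\alpha$ arbitrarily small, in particular bounded by $\delta(1-C)r/C_0$. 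The main obstacle is the chain-rule bookkeeping in the third step, but this is entirely parallel to the calculation already carried out in the proof of Proposition \ref{lem_CF_shrink_in_range}, so no essentially new idea is required.
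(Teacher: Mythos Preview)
Your proposal is correct and follows essentially the same route as the paper: cancel to get $\CF_\bx(0)(t)=\int_{-\infty}^t (V-V_0)(F_s(\bx))\,ds$, apply Lemma \ref{lem_integration_with_delta}, expand the $t$-derivatives by the higher chain rule \eqref{eqn_general_chain_rule}, feed in the $|\by_k|^\alpha|\by_{n-k}|^\alpha$ bound from Theorem \ref{thm_estimate_V-V_0_WuWs} (also for $\nabla^l(V-V_0)$), and finish by taking $\alpha$ large and $R$ small, noting that shrinking $R$ only improves the contraction constant $C$. The only cosmetic difference is that the paper tracks the exit time $T$ of $F_{-t}(\bx)$ from $U$ and bounds the stable factor through $e^{-\delta pT}$ and the estimate $e^{T}\le (R/|\bx_{n-k}|)^{1/\min_{i>k}|\lambda_i|}$, whereas you simply bound the stable factor by the diameter of the compact support and rely on the unstable decay $e^{\alpha\lambda^+_{\min}s}$ to make the weighted integral converge; both bookkeeping choices lead to the same conclusion.
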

\begin{proof}
Recall that
\[\CF_\bx(0)(t)=\int^t_{-\infty} V(F_s(\bx)-V_0(F_s(\bx))ds.\]
Then Lemma \ref{lem_integration_with_delta} and chain rule combined tell us
\begin{align}
\nonumber &\|\CF_\bx(0)(t)\|_{p,k,\delta}\leq \frac{C_0}{\delta} \sum_{j=0}^k\Big\|\left(\dint\right)^j(V-V_0)(F_t(\bx))\Big\|_{p,\delta}\\
\nonumber &\leq \frac{C_0}{\delta} \sum_{j=0}^k \sum_{l_1,\dots,l_j} \frac{j!}{l_1!\cdots l_j!} \frac{1}{(1!)^{l_1}\cdots (j!)^{l_j}} \|\nabla^l(V-V_0)(F_s(\bx))\mathbf{\lambda}^j F_s(\bx)^l\|_{p,\delta},
\end{align}
where $\lambda^j$ and $F_s(\bx)^l$ should be seen as $n$-vectors with powers taking on each elements, e.g. $\lambda^j=(\lambda_1^j,\dots, \lambda_n^j)$.\\
If eigenvalues $\lambda_i$'s satisfy $\mathbb{N}$-linearity condition, then by Theorem \ref{thm_estimate_V-V_0_WuWs}, we have the pointwise assumption for $V-V_0$,
\[|\nabla^j(V-V_0)(\bx)|\leq C_2(|x_1|^{1/\lambda_1}+\dots+|x_k|^{1/\lambda_k})^\alpha (|x_{k+1}|^{1/|\lambda_{k+1}|}+\dots+|x_n|^{1/|\lambda_n}|)^\alpha,\]
for any given positive constants $\alpha$ and its associated constant $C_2=C_2(\alpha)$. Based on this assumption we have
\begin{align}
\nonumber &|\nabla^l(V-V_0)(F_s(\bx))\lambda^jF_s(\bx)^l|\leq C_2 n\cdot \max_{i=1,\dots,n}|\lambda_i|^j \cdot\\
\nonumber &\,\cdot(|x_1|^{1/\lambda_1}+\dots+|x_k|^{1/\lambda_k})^\alpha (|x_{k+1}|^{1/|\lambda_{k+1}|}+\dots+|x_n|^{1/|\lambda_n}|)^\alpha \sum_{i=1,\dots,n}|x_ie^{\lambda_it}|^l.
\end{align}
Hence,
\begin{align}
\nonumber & \|\nabla^l(V-V_0)(F_s(\bx))\lambda^jF_s(\bx)^l\|_{p,\delta}\leq C_2 n \cdot\max_{i=1,\dots,n}|\lambda_i|^j\cdot(|x_1|^{1/\lambda_1}+\!\dots\!+|x_k|^{1/\lambda_k})^\alpha\\
\nonumber &\indent \cdot  (|x_{k+1}|^{1/|\lambda_{k+1}|}+\!\dots\!+|x_n|^{1/|\lambda_n}|)^\alpha R^l \left(\frac{e^{-\delta p T}-1}{\delta p}\right)^{1/p}\!.
\end{align}
Given that $T$ is defined as satisfying
\[|x_1|^2e^{-2\lambda_1T}+\dots+|x_n|^2e^{-2\lambda_nT}=R^2,\]
we have the estimate
\[e^T\leq (R(|x_{k+1}|+\dots+|x_n|)^{-1})^{\frac{1}{\min\limits_{k+1,\dots,n}|\lambda_i|}} .\]
Combining this estimate with the conclusion of Theorem \ref{thm_estimate_V-V_0_WuWs},
\begin{align*}
\nonumber &\|\CF_\bx(0)\|_{p,k,\delta} \\
\nonumber &\leq \frac{C_0}{\delta} \Big\{ n C_2 C_a \max_{i,j}|\lambda_i|^j \max\{R,\dots,R^k\} (|x_1|^{1/\lambda_1}+\!\dots\!+|x_k|^{1/\lambda_k})^\alpha (|x_{k+1}|^{1/|\lambda_{k+1}|}\\
\nonumber & \, +\!\dots\!+|x_n|^{1/|\lambda_n|})^\alpha\cdot (\frac{1}{\delta p} (R^{\frac{\delta p} {\min\limits_{k+1,\dots,n}|\lambda_i|}} (|x_{k+1}|+\dots+|x_n|)^{-\frac{\delta p}{\min\limits_{k+1,\dots,n}|\lambda_i|}}-1) )^{1/p}\Big\}\\
&\leq \frac{C_0}{\delta} n C_2 C_a \max_{i,j}|\lambda_i|^j \max\{R,\dots,R^k\}\, |\bx^{1/\lambda}_k|^\alpha|\bx^{1/\lambda}_{n-k}|^\alpha |\bx_{n-k}|^{\frac{-\delta} {\min\limits_{k+1,\dots,n}|\lambda_i|}}\\
\nonumber &\leq \frac{C_0}{\delta} n C_2 C_a \max_{i,j}|\lambda_i|^j \max\{R,\dots,R^k\} R^{\frac{2\alpha-\delta} {\min\limits_{k+1,\dots,n}|\lambda_i|}}.
\end{align*}
To achieve $\|\CF_bx(0)\|_{p,k,\delta}\leq (1-C)r$, we want to employ parameters that satisfy the following inequality
\begin{align*}
\|&\CF_\bx(0)\|_{p,k,\delta} \leq \frac{C_0}{\delta} n C_2 C_a \max_{i,j}|\lambda_i|^j \max\{R,\dots,R^k\} R^{\frac{2\alpha-\delta} {\min\limits_{k+1,\dots,n}|\lambda_i|}} \\
&\leq \left(1-\frac{C_0}{\delta}\left[C_a\|V-V_0\|_{\infty,k} \, \max\{C_S^k(r+R)^k,1\}+\max_{i=1,\dots,n}|\lambda_i| \right]\right)r= (1-C)r.
\end{align*}
It becomes clear that by choosing $\alpha$ large enough and $R>0$ sufficiently small, our estimate in concern $\|\CF(0)\|_{p,k,\delta}\leq (1-C)r$ is achieved. Note that the when $R$ is reasonably close to zero, the contraction constant $C$ only gets smaller, resulting in the improvement of right hand side of the inequality. So the choice of $\alpha$ and the reduction in the radius $R$ of the region $U$, or even altering for a larger $\delta$, will not jeopardise our estimate of $C$, even if the constant $C$ depends on $R$ and $\delta$.\\
As a final comment, in terms of the behaviour of $\CF_\bx(0)$ for every fixed $\bx$, the estimate gets improved near the stable and the unstable manifold as well, for $|\bx_k||\bx_{n-k}|$ is small.
\end{proof}
\begin{thm}\label{thm_CF_is_contracting_on_Omega}
Assume that the set of eigenvalues $\lambda_1,\dots,\lambda_n$ satisfies the $\mathbb{N}$-linearity condition. Given properly chosen constants $\delta>0$, $k\in \mathbb{N}$, $R>0$ and $r>0$, there exists a positive constant $C<1$, and $\Omega=\Omega(r)\subset L^p_{k,\delta}$, such that
\[\CF:U\times \Omega\rightarrow \Omega\]
is a contraction operator with contracting constant $C$.
\end{thm}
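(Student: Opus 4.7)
The plan is to deduce this theorem as an essentially immediate consequence of Proposition \ref{lem_CF_shrink_in_range} and Proposition \ref{lem_CF_drifting_domain}, which together already control both the Lipschitz behaviour of $\CF_\bx$ and the displacement of the zero function. The only thing genuinely left to verify is that the image $\CF_\bx(\Omega)$ actually lies inside $\Omega$, so that $\CF$ is a bona fide self-map of $\Omega$ rather than merely a Lipschitz map from $\Omega$ into the ambient Banach space.

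First I would fix the parameters in the correct order. Choose $p$ large enough so that the Sobolev embedding $L^p_{k,\delta} \hookrightarrow C^0$ used in Proposition \ref{lem_CF_shrink_in_range} applies, pick a radius $R>0$ for $U=B_\bo(R)$ and a radius $r>0$ for $\Omega=\Omega(r)$, and then take $\delta$ large enough that the contraction constant
\[
C=\frac{C_0}{\delta}\left[C_a\|V-V_0\|_{\infty,k}\max\{C_S^k(r+R)^k,1\}+\max_{i=1,\dots,n}|\lambda_i|\right]
\]
from Proposition \ref{lem_CF_shrink_in_range} satisfies $C<1$. Then, using Theorem \ref{thm_estimate_V-V_0_WuWs}, pick $\alpha$ large enough and shrink $R$ accordingly to secure the drift bound $\|\CF_\bx(0)\|_{p,k,\delta}\leq (1-C)r$ of Proposition \ref{lem_CF_drifting_domain}. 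As remarked at the end of the proof of that proposition, shrinking $R$ or enlarging $\delta$ only improves the contraction constant, so the two conditions can indeed be satisfied simultaneously.

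The key step is then a one-line triangle inequality argument: for any $\bx\in U$ and any $\gamma\in \Omega$, write
\begin{align*}
\|\CF_\bx(\gamma)\|_{p,k,\delta}
&\leq \|\CF_\bx(\gamma)-\CF_\bx(0)\|_{p,k,\delta}+\|\CF_\bx(0)\|_{p,k,\delta}\\
&\leq C\|\gamma-0\|_{p,k,\delta}+(1-C)r\\
&\leq Cr+(1-C)r=r,
\end{align*}
using Proposition \ref{lem_CF_shrink_in_range} for the first term and Proposition \ref{lem_CF_drifting_domain} for the second. Hence $\CF_\bx(\gamma)\in\Omega$, so $\CF$ restricts to a self-map $U\times \Omega\to\Omega$, and by Proposition \ref{lem_CF_shrink_in_range} it is contracting in the second argument with constant $C<1$, uniformly in $\bx\in U$.

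I do not expect any serious obstacle here, since the heavy analytic work — the weighted-norm estimate of Lemma \ref{lem_integration_with_delta}, the chain-rule bookkeeping in Proposition \ref{lem_CF_shrink_in_range}, and the decay estimate based on Theorem \ref{thm_estimate_V-V_0_WuWs} in Proposition \ref{lem_CF_drifting_domain} — has already been carried out. The only subtle point worth emphasising in the write-up is that the constant $C$ does depend on $R$ and $\delta$, so one must check that the choices of $\alpha$, $R$, $\delta$, $r$ can be made consistently; this is precisely what the last paragraph of the proof of Proposition \ref{lem_CF_drifting_domain} asserts. Once this consistency is spelled out, the conclusion $\CF:U\times\Omega\to\Omega$ being a contraction with constant $C$ is immediate.
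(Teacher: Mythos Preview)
Your proposal is correct and follows essentially the same route as the paper: invoke Proposition~\ref{lem_CF_drifting_domain} for $\|\CF_\bx(0)\|_{p,k,\delta}\le(1-C)r$, Proposition~\ref{lem_CF_shrink_in_range} for the Lipschitz bound, and combine them via the triangle inequality to get $\CF_\bx(\Omega)\subset\Omega$. Your explicit discussion of the order in which $p,R,r,\delta,\alpha$ are chosen is a useful clarification, but the argument itself is the same as the paper's.
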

\begin{proof}
From Proposition \ref{lem_CF_drifting_domain}, it is straightforward to see that for all $\bx\in U$, the zero function isn't mapped too far away, i.e.
\[\|\CF_\bx(0)(t)\|_{p,k,\delta}\leq (1-C)r.\]
In addition, Proposition \ref{lem_CF_shrink_in_range} shows that for all $\gamma_1,\gamma_2\in \Omega$,
\[\|\CF_\bx(\gamma_1)-\CF_\bx(\gamma_2)\|_{p,k,\delta}\leq C\|\gamma_1-\gamma_2\|_{p,k,\delta}.\]
especially for all $\gamma\in \Omega$,
\[\|\CF_\bx(\gamma)(t)-\CF_\bx(0)(t)\|_{p,k,\delta}\leq Cr.\]
then we have $\|\CF_\bx(\gamma)\|\leq r$, namely $\CF_\bx(\Omega)\subset \Omega$. As a result, $\CF$ is a contraction mapping on $\Omega$.
\end{proof}
\subsubsection{Proof of Theorem \ref{thm_conjugate_function_existence}.}\label{subsubsec_proof_of_main}
\noindent Now we present a proof for the main theorem.
\begin{proof}[Proof of Theorem \ref{thm_conjugate_function_existence}]
Theorem \ref{thm_CF_is_contracting_on_Omega} establishes the fact that $\CF:U\times \Omega\rightarrow \Omega$ is a contracting operator on $\Omega$, which is a convex subset of the complete metric space $L^p_{k,\delta}$ and hence $\Omega$ also is a convex complete metric space. By contraction mapping theorem, there exists a unique fixed point $p(t, \bx)\in \Omega$ of $\CF$, such that
\[p(t,\bx)=\int_{-\infty}^t (V-V_0)(p(s,\bx)+F_t(\bx))ds + \int_{-\infty}^t V_0(p(s,\bx)+F_s(\bx))-V_0(F_s(\bx))ds.\]
In fact, the integration indicates that $p(t,\bx)$ is smooth in $t$. And all $\delta>0$ large enough is a suitable choice, as the proof of Lemma \ref{lem_integration_with_delta} shows. Hence $p(t,\bx)\rightarrow \bo$ when $t\rightarrow -\infty$, for every fixed $\bx\in U$.\\
Let $q(t,\bx)=p(t,\bx)+F_t(\bx)$, then it solves $\dint u(t)=V(u(t))$, with its initial condition $u(0)=\Phi(\bx)$ determined by $p(t,\bx)$. In other words, $q(t,\bx)=G_t(\Phi(\bx))$, and
\begin{equation}\label{eqn_p_and_Phi}
p(t,\bx)=G_t(\Phi(\bx))-F_t(\bx).
\end{equation}
As $p(t,\bx)$ is the difference of two flow lines, both of which is smooth in $t$, it is legitimate to fix $t=0$ in equation \eqref{eqn_p_and_Phi} and write
\[\Phi(\bx)=p(0,\bx)+\bx.\]
Now let us determine the explicit form of $\Phi(\bx)$. Denote the generalized real number $T(\bx, F)=\inf\{t\in \RR: F_{-t}(\bx)\notin U\}$ for any $\bx\in U$, which is the time of the flow $F_{-t}(\bx)$ when it first exists the domain $U$; and similarly $T(\bx,G)$ be the time parameter of $G_{-t}(\bx)$ exiting the domain $U$. It follows that $T(\bx,F)$ equals $+\infty$ if and only if $\bx\in W^u(F)$, and when $\bx\notin W^u(F)$, $T(\bx,F)\in \RR$.\\
Let $\hat T=\max \{T(\bx, F),T(\Phi(\bx),G)\}$.\\
If $\hat T$ is finite, then for any $t<-\hat T$,
\[p(t,\bx)=F_{t+\hat T}(G_{-\hat T}(\Phi(\bx)))-F_t(\bx)=F_{t+\hat T}(G_{-\hat T}(\Phi(\bx))-F_{-\hat T}(\bx)),\]
as $G$ and $F$ have identical flow lines outside of $U$, and $F_t(\bx)$ is linear with respect to $\bx$. Consequently, $p(t,\bx)$ is a flow line of $F$, and $p(t,\bx)\rightarrow \bo$ when $t\rightarrow -\infty$, namely $p(t,\bx)$ is on the unstable submanifold, and it is contained in $L^p_{k,\delta}((-\infty,0];\RR^n)$ for all sufficiently large $\delta$. Such a flow line can only be the static flow at the critical point. Hence in this case
\[G_{-\hat T}(\Phi(\bx))-F_{-\hat T}(\bx)=\bo,\]
\[\Phi (\bx)=G_{\hat T}(F_{-\hat T}(\bx)))=\lim_{t\rightarrow \infty}G_tF_{-t}(\bx),\]
and $\Phi $ is a conjugate function between the local standard and general flow lines,
\[G_t(\Phi(\bx))=\Phi(F_t(\bx)), \forall t\in \RR,\]
\[\Phi^*(V)=V_0.\]
Otherwise, $\hat T$ is infinite. This implies that at least one of $\bx$ and $\Phi(\bx)$ is on the unstable submanifold of $F$, which is also the unstable submanifold of $G$. In fact, both of them are in the unstable locus: If not, let's say that $\bx\in W^u$ but $\Phi(\bx)\notin W^u$, then for sufficiently small $T$, $\|G_{T}(\Phi(\bx))\|$ is at least bounded by the radius of $U$, while $F_T(\bx)$ can be made as close to $\bo$ as we wish; this contradicts $p(t,\bx)\rightarrow \bo$, so both $\bx$ and $\Phi(\bx)$ are in $W^u$. But as we established before, $G_t(\bx)=F_t(\bx)$ for any $\bx$ in the stable and unstable loci, so we can always write
\[p(t,\bx)=F_t(\Phi(\bx)-\bx).\]
Again using the fact that an unstable flow line that is an element in $L^p_{k,\delta}((-\infty,0];$ $\RR^n)$ for all large $\delta$ is indeed trivial, we have in this case
\[\Phi(\bx)=\bx,\]
for all $\bx\in W^u$, and it indeed is the conjugate function between $F_t(\bx)$ and $G_t(\bx)$, which in this case are identical. And trivially, it still holds that
\[\Phi^*(V)=V_0.\]
\newline
To construct local coordinates by means of conjugation, it is necessary for $\Phi(\bx)$ to have enough regularity. It turns out that this is guaranteed by the Implicit Function Theorem (IFT) for Banach spaces \cite[10.2.1, 10.2.3]{Dieudonne69}, which we recall as follows :
\begin{addmargin}[1em]{.5em}
Let $E,F,G$ be Banach spaces, $f$ a continuously differentiable mapping of an open subset $A$ of $E\times F$ into $G$. Let $(x_0,y_0)$ be a point of $A$ s.t. $f(x_0,y_0)=0$ and that the partial derivative $D_2f(x_0,y_0)$ be a linear homeomorphism of $F$ onto $G$. Then there is an open neighbourhood $U_0$ of $x_0$ in E such that for every open connected neighbourhood $U$ contained in $U_0$, there is a unique continuous mapping $u$ of $U$ into $F$ such that $u(x_0)=y_0$, $(x,u(x))\in A$ and $f(x,u(x))=0$ for any $x\in U$. Furthermore, $u$ is continuously differentiable in $U$, and its derivative is
\[u'(x)=-(D_2f(x,u(x)))^{-1}\circ(D_1f(x,u(x))).\]
If in addition $f$ is $p$ times continuously differentiable in a neighbourhood of $(x_0,y_0)$, then $u$ is $p$ times continuously differentiable in a neighbourhood of $x_0$.
\end{addmargin}
Let Banach spaces $E=\RR^n$, $F,G=L^p_{k,\delta}((-\infty,0];\RR^n)$, and the open subset $A=U\times \Omega(r+\epsilon)^\circ$, with $\CF:U\times \Omega(r)\rightarrow \Omega(r)$ being contracting and $\epsilon$ small enough. Consider the operator
\begin{align}
\nonumber &\CP: E\times F\rightarrow G\\
\nonumber &\CP(\bx,u(t))=\CF_\bx(u(t))-u(t).
\end{align}
Then what we established before translates to that $(x_0,y_0)=(\bx, p(t,\bx))$ is the unique solution to $\CP(x_0,y_0)=0$. Moreover, if $D_2\CP(\bx,p(t,\bx))$ is indeed a linear homeomorphism from $L^p_{k,\delta}((-\infty,0];\RR^n)$ onto itself, then IFT gives us the regularity of $p(t,\bx)$ with respect to $\bx$ -- because the uniqueness of the fixed point guarantees that $p(t,\bx)$ coincides with the local inverse function. Consequently, the fact that $\CP$ is $C^\infty$ in $(\bx, u(t))$ implies that $p(t,\bx)$ is $C^\infty$ in $\bx$. Specifically, by $\Phi(\bx)=p(0,\bx)+\bx$, the smoothness of $\Phi(\bx)$ follows. Combining the smoothness of $\Phi$ with the fact that $\Phi(\bx)=\bx$ on the stable and the unstable manifold, which includes the origin, $\Phi$ is smooth and invertible on a small neighbourhood of $\bo$, and this makes $\Phi$ a local diffeomorphism.\\
Now we show that $D_2\CP(\bx,p(t,\bx))$ is a surjective linear homeomorphism. Let us denote $A(\cdot)=D_2\CP(\bx,p(t,\bx))(\cdot)$ to be a linear operator from $L^p_{k,\delta}((-\infty,0];\RR^n)$ to itself. If $\delta u(t)$ is a small variation that is sufficiently smooth, then we have
\begin{align}
\nonumber &\CP(\bx,p(t,\bx)+\delta u(t))-\CP(\bx,p(t,\bx))=A(\delta u(t))+o(\delta u(t))\\
\nonumber &=\int_{-\infty}^t [V(p(t,\bx)+\delta u(t)+F_s(\bx))-V(p(t,\bx)+F_s(\bx))]ds -\delta u(t)+o(\delta u(t))\\
\nonumber &=\int_{-\infty}^t DV(p(t,\bx)+F_s(\bx))\cdot \delta u(s) ds-\delta u(t)+o(\delta u(t))\\
\nonumber &=\int_{-\infty}^t \Hess f(G_s(\Phi(\bx)))\cdot \delta u(s) ds-\delta u(t)+o(\delta u(t)).
\end{align}
As a result, the derivative of $\CP$ is
\[A(v(t))=D_2\CP(\bx,p(t,\bx))(v(t))=\int_{-\infty}^t \Hess f(G_s(\Phi(\bx)))\cdot v(s) ds-v(t).\]
Apparently $A$ is linear. Moreover, there exists a positive constant $c$, which we may assume less than 1, such that
\[ \|(A+Id)(v(t))\|_{p,k,\delta}\leq c\|v(t)\|_{p,k,\delta}.\]
This is because
\begin{align}
\nonumber\|(A+Id)(v(t))\|_{p,k,\delta}&=\|\int_{-\infty}^t \Hess f(G_s(\Phi(\bx)))\cdot v(s) ds\|_{p,k,\delta}\\
\nonumber &\leq \frac{C_0}{\delta}\|\Hess f(G_s(\Phi(\bx)))\cdot v(s) \|_{p,k,\delta}\\
\nonumber &\leq \frac{C_0}{\delta}\|\Hess f\|_\infty\|v(t)\|_{p, k,\delta},
\end{align}
 following from Lemma \ref{lem_integration_with_delta}; and in addition we claim that $c:=\frac{C_0}{\delta}\|\Hess f\|_\infty<1$, which can be easily achieved by picking larger $\delta$ in Lemma \ref{lem_integration_with_delta}'s proof. It follows that $A$ is a continuous linear map on $L^p_{k,\delta}$ and reasonably close to the identity. Then this operator is an isomorphism, and its inverse can be constructed as
 \[A^{-1}= -(Id-(A+Id))^{-1}=-\sum_{k=0}^\infty (A+Id)^k,\]
 which is the unique limit of an absolutely convergent sequence in the dual space of $L^p_{k,\delta}$. Hence $D_2\CP(\bx,p(t,\bx))$ is a homeomorphism and onto, as the IFT requires, which then concludes our proof.

\end{proof}

\noindent Mathematical Institute, Radcliffe Observatory Quarter, Oxford, UK. OX2 6GG\\
Email: Yixuan.Wang1@maths.ox.ac.uk
\end{document}